\DeclareMathOperator{\Refl}{\mathsf{Refl}}
\DeclareMathOperator{\Perf}{\mathsf{Perf}}
\DeclareMathOperator{\Spec}{\mathsf{Spec}}
\DeclareMathOperator{\Jet}{\mathsf{Jet}}
\DeclareMathOperator{\pr}{\mathsf{pr}}
\DeclareMathOperator{\id}{\mathsf{id}}
\DeclareMathOperator{\sgn}{\mathsf{sgn}}
\DeclareMathOperator{\Hom}{\mathsf{Hom}}
\DeclareMathOperator{\sHom}{\mathcal{H}\textit{om}}
\DeclareMathOperator{\Coh}{\mathsf{Coh}}
\DeclareMathOperator{\QCoh}{\mathsf{QCoh}}
\DeclareMathOperator{\Ho}{\mathsf H}
\DeclareMathOperator{\rank}{\mathsf{rank}}
\DeclareMathOperator{\supp}{\mathsf{supp}}
\DeclareMathOperator{\Sym}{\mathsf{Sym}}
\DeclareMathOperator{\Gr}{\mathsf{Gr}}
\let\deg\relax
\DeclareMathOperator{\deg}{\mathsf{deg}}
\let\det\relax
\DeclareMathOperator{\det}{\mathsf{det}}
\newcommand{\an}{\mathsf{an}}
\newcommand{\D}{{\mathsf D}}
\DeclareMathOperator{\Aut}{\mathsf{Aut}}
\DeclareMathOperator{\Pic}{\mathsf{Pic}}
\newcommand{\cI}{{\mathcal I}}
\newcommand{\cJ}{{\mathcal J}}
\newcommand{\IA}{\mathbb{A}}
\newcommand{\IC}{\mathbb{C}}
\newcommand{\IN}{\mathbb{N}}
\newcommand{\IP}{\mathbb{P}}
\newcommand{\IB}{\mathbb{B}}
\newcommand{\IG}{\mathbb{G}}
\DeclareMathOperator{\Res}{\mathsf{Res}}
\DeclareMathOperator{\VB}{\mathsf{VB}}
\DeclareMathOperator{\FM}{\mathsf{FM}}
\DeclareMathOperator{\MM}{\mathsf{M}}
\DeclareMathOperator{\FF}{\mathsf{F}}
\DeclareMathOperator{\CC}{\mathsf{C}}
\DeclareMathOperator{\sfS}{\mathsf{S}}
\newcommand{\leqnomode}{\tagsleft@true}
\newcommand{\reqnomode}{\tagsleft@false}
\let\ker\relax
\DeclareMathOperator{\ker}{\mathsf{ker}}
\let\dim\relax
\DeclareMathOperator{\dim}{\mathsf{dim}}
\newcommand{\sym}{\mathfrak S}
\newcommand{\cF}{\mathcal F}
\newcommand{\cE}{\mathcal E}
\newcommand{\cZ}{\mathcal Z}
\newcommand{\cC}{\mathcal C}
\newcommand{\cD}{\mathcal D}
\newcommand{\cQ}{\mathcal Q}
\newcommand{\fm}{\mathfrak m}
\newcommand{\alt}{\mathfrak a}
\newcommand{\reg}{\mathcal O}
\newcommand{\eps}{\varepsilon}
\renewcommand{\theta}{\vartheta}
\renewcommand{\rho}{\varrho}
\renewcommand{\phi}{\varphi}
\renewcommand{\_}{\underline{\,\,\,\,}}
\newtheorem{theorem}{Theorem}[section]
  \newaliascnt{proposition}{theorem}
  \newtheorem{prop}[proposition]{Proposition}
  \newaliascnt{lemma}{theorem}
  \newtheorem{lemma}[lemma]{Lemma}
  \newaliascnt{corollary}{theorem}
  \newtheorem{cor}[corollary]{Corollary}
\theoremstyle{definition}
  \newaliascnt{definition}{theorem}
  \newaliascnt{remark}{theorem}
  \newtheorem{remark}[remark]{Remark}
  \newaliascnt{condition}{theorem}
  \newaliascnt{question}{theorem}
  \newtheorem{question}[question]{Question}
  \newaliascnt{example}{theorem}
\begin{document}

\title[Reconstruction from tautological images on Hilbert schemes]{Some ways to reconstruct a sheaf from its tautological image on a Hilbert scheme of points}
\author[A.\ Krug and J.\ V.\ Rennemo]{Andreas Krug and J\o rgen Vold Rennemo}
\begin{abstract}
For $X$ a smooth quasi-projective variety and $X^{[n]}$ its associated Hilbert scheme of $n$ points, we study two canonical Fourier--Mukai transforms $\D(X)\to \D(X^{[n]})$, the one along the structure sheaf and the one along the ideal sheaf of the universal family. For $\dim X\ge 2$, we prove that both functors admit a left-inverse. This means in particular that both functors are faithful and injective on isomorphism classes of objects. Using another method, we also show in the case of an elliptic curve that the Fourier--Mukai transform along the structure sheaf of the universal family is faithful and injective on isomorphism classes. Furthermore, we prove that the universal family of $X^{[n]}$ is always flat over $X$, which implies that the Fourier--Mukai transform along its structure sheaf maps coherent sheaves to coherent sheaves.      
\end{abstract}
 \maketitle

\section{Introduction}

Tautological bundles on Hilbert schemes of points have been intensively studied from various perspectives, and have been used for many applications, starting in the 1960s when they were studied on symmetric products of curves; see \cite{Schwarzenberger--bundlesplane, Schwarzenberger--secant, Mattuck--sym}. They are defined by means of the Fourier--Mukai transform along the universal family of the Hilbert scheme. More concretely, let $X$ be a smooth quasi-projective variety over $\IC$, let $X^{[n]}$ be the associated Hilbert scheme of $n$ points, and let $\Xi_n\subset X^{[n]}\times X$ be the universal family of length $n$ subschemes of $X$, together with its projections $X\xleftarrow q \Xi_n\xrightarrow p X^{[n]}$. Given a rank $r$ vector bundle $E$ on $X$, there is the associated tautological bundle $E^{[n]}=p_*q^*E$ of rank $rn$ on $X^{[n]}$.

A very natural question is whether the bundle on $X$ can be reconstructed from its associated tautological bundle on the Hilbert scheme:
\begin{align}\label{eq:Q}
 E^{[n]}\cong F^{[n]}\quad \xRightarrow{\quad \textbf{\large ?}\quad}\quad E\cong F\,.
\end{align}
This question was studied quite recently by Biswas and Parameswaran \cite{Biswas-Parameswaran--reconstructioncurves} and by Biswas and Nagaraj \cite{Biswas-Nagaraj--reconstructioncurves, Biswas-Nagaraj--reconstructionsurfaces}.
Maybe surprisingly, Question \eqref{eq:Q} has a negative answer if $X=\IP^1$; see \cite[Sect.\ 2.1]{Biswas-Nagaraj--reconstructionsurfaces}. On the other hand, the answer to Question \eqref{eq:Q} is affirmative for semi-stable vector bundles on curves of genus $g(X)\ge 2$, see \cite{Biswas-Parameswaran--reconstructioncurves, Biswas-Nagaraj--reconstructioncurves}, and for arbitrary vector bundles on surfaces; see \cite{Biswas-Nagaraj--reconstructionsurfaces}.

In the present paper, we generalise, strengthen, and complement these results in various directions. 
We extend the results from vector bundles to coherent sheaves and, more generally, objects in the derived category. We obtain new affirmative answers to Question \eqref{eq:Q} for varieties of higher dimension and  
for elliptic curves. In most cases, we prove something slightly stronger than an affirmative answer to Question \eqref{eq:Q}, namely the existence of left-inverses to the functor $(\_)^{[n]}$. Furthermore, we obtain similar results if we replace $(\_)^{[n]}$, which is the Fourier--Mukai transform along the structure sheaf of the universal family, by the Fourier--Mukai transform along the ideal sheaf of the universal family.

Let us describe the results in more detail. As alluded to above, given a smooth quasi-projective variety $X$ over $\IC$ and a positive integer $n$, we consider the Fourier--Mukai transform
\[
(\_)^{[n]}=\FM_{\reg_{\Xi_n}}\cong p_*\circ q^*\colon \D(X)\to \D(X^{[n]})
\]
between the derived categories of perfect complexes. We can now extend Question \eqref{eq:Q} from vector bundles to objects of the derived category $\D(X)$. In other words, we ask ourselves whether the functor $(\_)^{[n]}\colon \D(X)\to \D(X^{[n]})$ is injective on isomorphism classes. However, for a better understanding of this functor, we first prove the following, which extends a result of Scala \cite[Sect.\ 2.1]{Sca2} from surfaces to varieties of arbitrary dimension.
\begin{theorem}[\autoref{prop:flatalgebraic}]
The universal family $\Xi_n\subset X^{[n]}\times X$ is always flat over $X$. Hence, the functor $(\_)^{[n]}$ sends coherent sheaves on $X$ to coherent sheaves on $X^{[n]}$. 
\end{theorem}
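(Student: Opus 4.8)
The plan is to reduce to the case $X=\IA^d$ and there exploit the transitive action of the translation group, so that flatness follows from generic flatness alone; I avoid any global regularity/Cohen--Macaulay input, which is important since $X^{[n]}$ (and hence $\Xi_n$) can be badly singular when $\dim X\ge 3$. First I would note that flatness of $q\colon\Xi_n\to X$ is checked pointwise: it suffices to show $\mathcal O_{\Xi_n,z}$ is flat over $\mathcal O_{X,x}$ for every $z=(\xi,x)$. To localise the problem around $x\in X$, I would invoke the standard local product structure of Hilbert schemes along disjoint supports: writing $\xi=\xi'+\xi''$ with $\xi'$ the part of $\xi$ supported at $x$ (of some length $m$) and $\xi''$ the part supported on $\supp(\xi)\setminus\{x\}$, an \'etale neighbourhood of $\xi$ in $X^{[n]}$ splits, via disjoint union, as a product of a neighbourhood of $\xi'$ in $X^{[m]}$ and one of $\xi''$ in $X^{[n-m]}$. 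Since a point $y$ near $x$ can meet only the first factor, this identifies $\Xi_n$ near $z$ with $(\Xi_m\text{ near }(\xi',x))\times(X^{[n-m]}\text{ near }\xi'')$ compatibly with $q$, and as the projection onto the second factor is flat, flatness of $q$ at $z$ reduces to flatness of $\Xi_m\to X$ at $(\xi',x)$ with $\xi'$ supported at $x$. There only subschemes supported near $x$ intervene, so the relevant local ring and its $\mathcal O_{X,x}$-module structure depend only on the formal neighbourhood $\widehat{\mathcal O}_{X,x}\cong\IC[[t_1,\dots,t_d]]$; as $X$ is smooth this matches the origin of $\IA^d$, and (checking flatness after the faithfully flat completion) I may replace $X$ by $\IA^d$.

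For the affine case, the key point is homogeneity. The additive group $\IG_a^d$ acts on $\IA^d$ by translation, and this lifts to compatible actions on $(\IA^d)^{[n]}$ and on $\Xi_n\subset(\IA^d)^{[n]}\times\IA^d$, making $q\colon\Xi_n\to\IA^d$ equivariant. By Grothendieck's generic flatness theorem, $q$ is flat over some dense open $U\subseteq\IA^d$. Since translations act transitively on closed points, for any $z\in\Xi_n$ I can choose $g\in\IG_a^d$ with $g\cdot q(z)\in U$; then $g\cdot z$ lies over $U$, where $q$ is flat, and because $g$ is an automorphism of $\Xi_n$ covering a translation of $\IA^d$, flatness descends back to $z$. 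Hence $q$ is flat at every point, i.e.\ flat, which completes the proof of flatness in general.

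Granting flatness, the final assertion is formal: for a coherent sheaf $E$ on $X$, flatness of $q$ gives $\mathbf Lq^*E=q^*E$, a coherent sheaf on $\Xi_n$ with no higher Tor, and since $p$ is finite we get $(\_)^{[n]}(E)=\mathbf Rp_*\mathbf Lq^*E=p_*q^*E$, again coherent. The step I expect to be the main obstacle is precisely the reduction to $\IA^d$: one must justify that flatness of $q$ at $(\xi,x)$ is genuinely a question local around $x\in X$, \emph{despite} $\xi$ possibly having support far from $x$, which is exactly the role of the disjoint-support product structure and the passage to completions. Once that localisation is in place, the homogeneity argument is robust and, unlike a ``miracle flatness'' approach via regularity of $X$, is entirely insensitive to the singularities of $\Xi_n$.
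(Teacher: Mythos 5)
Your proof is correct in its engine, and that engine is exactly the paper's: on the standard local model, generic flatness produces a dense open of flat points, and a transitive group of automorphisms, acting compatibly on the Hilbert scheme and its universal family, transports flatness to every point (you use translations of $\IA^d$; the paper uses all of $\Aut(\IC^d)$, of which translations already suffice). Where you genuinely diverge is the reduction of the general case to that model. The paper (proofs of \autoref{prop:flatalgebraic} and \autoref{prop:flatanalytic}) applies GAGA to replace $\Xi_n$ by the universal family of the Douady space of $X^{\an}$; in the Hausdorff analytic topology the localisation is then essentially trivial, because the points of $\supp(\xi)$ admit pairwise disjoint open neighbourhoods $U_i$, each biholomorphic to an open subset of $\IC^d$, so that near $(\xi,x)$ the family is an open piece of $\Xi^{U_0}_{n_0}\times U_1^{[n_1]}\times\dots\times U_k^{[n_k]}$ and flatness follows by composing flat maps. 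You instead stay entirely algebraic, and that is precisely why your reduction becomes the hard part: in the Zariski topology one can neither separate the support by disjoint opens nor find neighbourhoods of $x$ isomorphic to opens of $\IA^d$. Your substitutes are (a) the \'etale-local product decomposition along disjoint supports (\'etaleness of the union map from the disjoint-support locus of $X^{[m]}\times X^{[n-m]}$ to $X^{[n]}$, together with descent of flatness along the induced faithfully flat maps of local rings), and (b) a passage to formal completions to replace $(X,x)$ by $(\IA^d,0)$. The trade-off is clear: the paper buys a painless localisation at the price of importing Douady spaces, the Barlet morphism, and GAGA for flatness; you avoid analytic geometry altogether at the price of \'etale and deformation-theoretic bookkeeping.

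I see no fatal gap, and your closing paragraph rightly identifies step (b) as the delicate point; but be aware that, as written, it is only a gesture. ``The relevant local ring depends only on $\widehat{\mathcal O}_{X,x}$'' needs an actual argument: that $\widehat{\mathcal O}_{X^{[m]},\xi'}$ pro-represents the functor of embedded infinitesimal deformations of the punctual scheme $\xi'$, which are supported at $x$ and hence depend only on $\widehat{\mathcal O}_{X,x}\cong\widehat{\mathcal O}_{\IA^d,0}$, \emph{together with} the identification of the universal families and of their projections to $X$, so that $\widehat{\mathcal O}_{\Xi_m,(\xi',x)}$ as an $\widehat{\mathcal O}_{X,x}$-algebra matches the corresponding complete local ring of the affine model; and finally that flatness transfers through completion in both directions, namely descent along the faithfully flat completion maps in one direction, and in the other a local-criterion argument using, for the complete local ring $B$ in question, the comparison $\Tor_1^{\mathcal O_{\IA^d,0}}(B,\IC)\cong\Tor_1^{\widehat{\mathcal O}_{\IA^d,0}}(B,\IC)$. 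All of this is standard and can be written out, but a referee would ask for it. Your deduction of the final coherence statement (exactness of $q^*$ by flatness, exactness and coherence-preservation of $p_*$ by finiteness) is the same as the paper's and is fine.
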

Besides $(\_)^{[n]}$, there is a second, equally canonical, Fourier--Mukai transform to the Hilbert scheme, namely the one along the universal ideal sheaf
\[
 \FF_n=\FM_{\cI_{\Xi_n}}\cong \pr_{X^{[n]}*}\bigl(\cI_{\Xi_n}\otimes \pr_X^{*}(\_)\bigr)\colon \D(X)\to \D(X^{[n]})\,.
\]
This functor was studied in the case that $X$ is a K3 surface in \cite{Add} and \cite{MMdefK3}, and in the case that $X$ is a surface with $\Ho^1(\reg_X)=0=\Ho^2(\reg_X)$ in \cite{KSEnriques}. In the case of a K3 surface, it was shown that $\FF_n$ is a $\IP^{n-1}$-functor. In the case of a surface with $\Ho^1(\reg_X)=0=\Ho^2(\reg_X)$, it was shown that $\FF_n$ is fully faithful. In both cases, it follows that $\FF_n$ is injective on isomorphism classes and faithful. 

Let us now summarise the results of this paper concerning reconstruction of objects from their images under the functors $(\_)^{[n]}$ and $\FF_n$. 

\begin{theorem}[\autoref{thm:lefti}]\label{thm:Psiintro}
If $X$ is a smooth quasi-projective variety of dimension $d=\dim X\ge 2$, there are left-inverses of both functors $(\_)^{[n]},\FF_n\colon \D(X)\to \D(X^{[n]})$ for every $n\in \IN$. In particular, $(\_)^{[n]}$ and $\FF_n$ are faithful, and for every pair $E,F\in \D(X)$ we have
\[E^{[n]}\cong F^{[n]}\quad \Longleftrightarrow \quad E\cong F \quad \Longleftrightarrow \quad \FF_n(E)\cong \FF_n(F)\,.\]   
\end{theorem}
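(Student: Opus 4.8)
The plan is to construct an explicit left-inverse $\Phi\colon \D(X^{[n]})\to \D(X)$ of $(\_)^{[n]}$; the case of $\FF_n$ is entirely parallel, and faithfulness together with injectivity on isomorphism classes is then formal (if $\Phi\circ(\_)^{[n]}\iso\id$, then $E^{[n]}\iso F^{[n]}$ forces $E\iso \Phi(E^{[n]})\iso\Phi(F^{[n]})\iso F$, and $(\_)^{[n]}$ is faithful since $\Phi$ splits it on morphisms). The geometric input is the \emph{punctual locus}: let $M_n\subseteq X^{[n]}$ be the closed subscheme of those length-$n$ subschemes concentrated at a single point, with the support morphism $\mu\colon M_n\to X$ and the inclusion $\iota\colon M_n\mono X^{[n]}$. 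Since $p\colon \Xi_n\to X^{[n]}$ is finite flat of degree $n$, the functor $(\_)^{[n]}=p_*q^*$ is $t$-exact, and I would look for a left-inverse of the shape $\Phi(\_)=R\mu_*\bigl(\iota^*(\_)\lotimes \mathcal K\bigr)$ for a suitable kernel $\mathcal K\in\D(M_n)$ (all functors derived).

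The key structural observation is that on the punctual locus the tautological construction only sees the fibres of $E$. Concretely, pulling the universal family back along $\iota$ and using flat base change along the finite flat $p$, one obtains a natural isomorphism $\iota^*(E^{[n]})\iso \mu^*E\lotimes \mathcal T_n$, where $\mathcal T_n:=\iota^*(\reg_X^{[n]})$ is \emph{independent of $E$}; this reflects the elementary fact that a length-$n$ subscheme $\xi$ supported at a point $x$ satisfies $E\rest{\xi}\iso E\otimes k(x)\otimes \reg_\xi$ for locally free $E$. Feeding this into the projection formula gives
\[
\Phi\bigl(E^{[n]}\bigr)\iso R\mu_*\bigl(\mu^*E\lotimes \mathcal T_n\lotimes \mathcal K\bigr)\iso E\lotimes R\mu_*\bigl(\mathcal T_n\lotimes \mathcal K\bigr),
\]
so that $\Phi\circ(\_)^{[n]}\iso\id$ \emph{if and only if} $R\mu_*(\mathcal T_n\lotimes \mathcal K)\iso\reg_X$.

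Everything thus reduces to producing $\mathcal K$ with $R\mu_*(\mathcal T_n\lotimes\mathcal K)\iso\reg_X$, and this is exactly where $\dim X\ge 2$ is indispensable. The fibre of $\mu$ over $x$ is the punctual Hilbert scheme $\Hilb^n_0(\IA^d)$, which is positive-dimensional precisely when $d\ge2$; there $\mathcal T_n$ restricts to a rank-$n$ bundle, and the higher direct image $R\mu_*$ has room to collapse this rank-$n$ sheaf onto a line bundle. For $d=1$ this is impossible: there $\mu$ is an isomorphism and $\mathcal T_n$ is the rank-$n$ bundle of $(n-1)$-jets, so $\mathcal T_n\lotimes\mathcal K\iso\reg_X$ would force $\mathcal T_n$ to be $\otimes$-invertible, which it is not --- in agreement with the known failure of reconstruction for $X=\IP^1$. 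By flat base change it suffices to arrange the fibrewise statement $R\Gamma\bigl(\Hilb^n_0(\IA^d),\,\mathcal T_n\lotimes\mathcal K\bigr)\iso k$ concentrated in degree $0$, with the natural $\IG_m$-action guaranteeing triviality of the resulting line bundle on $X$. Producing such a $\mathcal K$ is the main obstacle, since $\Hilb^n_0(\IA^d)$ is singular and its sheaf cohomology is not explicitly known for $d\ge 3$; a rank count shows that no \emph{sheaf} $\mathcal K$ supported finitely over $X$ can work, so I expect to build $\mathcal K$ as a Koszul-type complex adapted to the $\IG_m$-weight decomposition of $\mathcal T_n$ that isolates its lowest-weight, ``constant-jet'' line.

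For $\FF_n$ the same scheme applies verbatim once $\reg_{\Xi_n}$ is replaced by $\cI_{\Xi_n}$: restriction to the punctual locus again yields $\iota^*\FF_n(E)\iso\mu^*E\lotimes\mathcal T_n'$ with $\mathcal T_n':=\iota^*\FF_n(\reg_X)$ independent of $E$, and I would run the identical cohomological argument with $\mathcal T_n'$ in place of $\mathcal T_n$ to produce a kernel $\mathcal K'$ and hence a left-inverse of $\FF_n$. Granting the existence of $\mathcal K$ and $\mathcal K'$, the formal consequences recorded in the first paragraph complete the proof of the displayed chain of equivalences.
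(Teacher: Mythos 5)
There is a genuine gap, and it sits at the very first step: the claimed natural isomorphism $\iota^*(E^{[n]})\iso \mu^*E\lotimes \mathcal T_n$ with $\mathcal T_n=\iota^*(\reg_X^{[n]})$ is false. The fibrewise fact that $E\rest{\xi}\iso E(x)\otimes_\IC\reg_\xi$ for a punctual $\xi$ is a \emph{non-canonical} isomorphism (it depends on a trivialisation of $E$ near $x$) and does not globalise over the punctual locus; the obstruction is precisely the Atiyah class of $E$. Geometrically, the restricted universal family $\cZ=(\iota\times\id_X)^{-1}(\Xi_n)\subset M_n\times X$ is thickened in the $X$-direction, so $\FM_{\reg_\cZ}(E)$ computes \emph{jets} of $E$ along the support map, not fibres. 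A concrete counterexample: take $d=2$, $n=3$, and restrict to the closed sublocus $X\iso\{\Spec(\reg_{X,x}/\fm_x^2)\mid x\in X\}\subset M_3$ of fat points; over it the universal family becomes $2\Delta\subset X\times X$, so $\iota^*E^{[3]}$ restricts to $\Jet^1E$, the Atiyah extension of $E$ by $\Omega_X\otimes E$, whereas $\mu^*E\lotimes\mathcal T_3$ restricts to $E\otimes\Jet^1\reg_X\iso E\oplus(\Omega_X\otimes E)$ (the Atiyah class of $\reg_X$ vanishes, so $\Jet^1\reg_X$ splits). For $X=\IP^2$ and $E=\reg_{\IP^2}(1)$ the former is $\Jet^1\reg_{\IP^2}(1)\iso\reg_{\IP^2}^{\oplus 3}$ (Euler sequence), while the latter is $\reg_{\IP^2}(1)\oplus\Omega_{\IP^2}(1)$, which is not trivial by Krull--Schmidt since $\Omega_{\IP^2}(1)$ is indecomposable of rank $2$. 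So your reduction of the whole problem to finding $\mathcal K$ with $R\mu_*(\mathcal T_n\lotimes\mathcal K)\iso\reg_X$ collapses, for $(\_)^{[n]}$ and $\FF_n$ alike. (Your own $d=1$ sanity check already signals this: on a curve $\iota^*E^{[n]}\iso\Jet^{n-1}E$, which is not $E\otimes\Jet^{n-1}\reg_X$ in general.) A second, independent gap is that even granting the isomorphism, the kernel $\mathcal K$ is never constructed --- you state explicitly that this is ``the main obstacle'' --- so the proposal is incomplete on its own terms.

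For comparison, the paper's proof is designed exactly around the failure of your tensor-product ansatz. It restricts not to the full punctual locus but to the small stratum $\IG=\Gr(\Sym^m\Omega_X,\ell)\hookrightarrow X^{[n]}$ of punctual subschemes squeezed between $\fm_x^{m+1}$ and $\fm_x^m$, where the restriction of $E^{[n]}$ sits in an exact triangle $\cQ\otimes f^*E\to\iota^*E^{[n]}\to f^*\Jet^{m-1}E$. The $E$-dependent jet term is still there, but instead of trying to untwist it, the left-inverse $K=f_*(\cQ^\vee\otimes\iota^*(\_))$ \emph{annihilates} it using semi-orthogonality of Kapranov's relative exceptional collection ($f_*(\cQ^\vee\otimes f^*(\_))\iso 0$), while recovering $E$ from the subobject via $f_*(\sHom(\cQ,\cQ)\otimes f^*E)\iso E$; the same vanishing kills $\reg_{X^{[n]}}$ and hence also handles $\FF_n$ via the triangle relating $\FF_n$ and $(\_)^{[n]}$. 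When $n=\binom{m+d}{d}$ this stratum degenerates to $\IG=X$ and only recovers $\Jet^mE$, which is why the paper needs a second construction on a $\IP$-bundle inside the nested Hilbert scheme $X^{[n-1,n]}$. Your ansatz $R\mu_*(\mu^*E\lotimes\mathcal T_n\lotimes\mathcal K)\iso E\lotimes R\mu_*(\mathcal T_n\lotimes\mathcal K)$ erases, by fiat, exactly the $E$-dependent extension structure that these two constructions exist to deal with.
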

\autoref{thm:Psiintro} is proved in \autoref{sect:smallstratum}. We need two different constructions of a left-inverse of the functors $(\_)^{[n]}$ and $\FF_n$. Which one of the two constructions works depends on whether or not $n$ is of the form $\binom{m+d}d$ for some $m\in \IN$.    

If $n$ is not of the form $\binom{m+d}d$ for some $m\in \IN$, we write $n=\binom{m+d-1}d +\ell$ for some $0< \ell< \binom{m+d-1}{d-1}=\rank(\Sym^m \Omega_X)$. Let $\IG\subset X^{[n]}$ be the locus of subschemes $\xi\subset X$ such that
\[\Spec(\reg_{X,x}/\fm_x^m)\subset \xi\subset \Spec(\reg_{X,x}/\fm_x^{m+1})\,\]
for some $x \in X$; in particular these are \emph{punctual} subschemes in the sense that $\supp(\xi) = \{x\}$.
The morphism $f\colon \IG\to X$, $\xi\mapsto x$, given by forgetting the scheme structure of $\xi$, identifies $\IG$ with the Grassmannian bundle $\Gr(\Sym^m \Omega_X, \ell)$ of rank $\ell$ quotients of $\Sym^m\Omega_X$. We denote the universal quotient bundle on $\IG$ by $\cQ$ and write the closed embedding as $\iota\colon \IG\hookrightarrow X^{[n]}$.
In \autoref{thm:Gres}, we prove that the functor $K:=f_*(\cQ^\vee\otimes \iota^*(\_))\colon \D(X^{[n]})\to \D(X)$ is a left-inverse of $(\_)^{[n]}$ and of $\FF_n[1]$.

If $n=\binom{m+d}d$ for some $m\in \IN$ and $d\ge 2$, we construct another functor $N\colon \D(X^{[n]})\to \D(X)$ which is then proven to be a left-inverse of $(\_)^{[n]}$ and of $\FF_n[1]$ in \autoref{thm:Pres}. The functor $N$ is somewhat similar to the functor $K$ described above, but, instead of $\IG\subset X^{[n]}$, it uses a certain locus of pairs of punctual schemes inside the nested Hilbert scheme $X^{[n-1,n]}\subset X^{[n-1]}\times X^{[n]}$; see \autoref{sect:nested} for details. Obviously, \autoref{thm:Gres} and \autoref{thm:Pres} together proof \autoref{thm:Psiintro}.

In \autoref{sect:symstack}, using equivariant sheaves on the cartesian product $X^n$, we also give two other reconstruction methods. These methods give an affirmative answer to question \eqref{eq:Q} in many cases; namely for arbitrary objects in $\D(X)$ if $X$ is a surface, and for reflexive sheaves if $X$ is projective of dimension $d>2$. However, they are slightly weaker than the methods of \autoref{sect:smallstratum} which work for every object in $\D(X)$ for every $d\ge 2$. The reason that we decided to include the constructions of \autoref{sect:symstack} in the paper is twofold. Firstly, the proofs in \autoref{sect:symstack} are somewhat easier than those in \autoref{sect:smallstratum}. Secondly, the construction of \autoref{subsect:reflexive} is used in \cite{BiswasKrug} to prove an analogous reconstruction result for Hitchin pairs. 

If the variety $X$ has fixed-point free automorphisms, we find a further reconstruction method which, contrary to the other methods, also works for curves. Here, we only state the most significant consequence, and refer to \autoref{sect:freeautos} for details. 

\begin{theorem}[\autoref{cor:abelian}]\label{thm:ellipticintro}
Let $X$ be an elliptic curve. Then, for every $n\in \IN$ and every pair $E,F\in \D(X)$, we have  
\[E^{[n]}\cong F^{[n]}\quad \Longleftrightarrow \quad E\cong F\,.\]  
\end{theorem}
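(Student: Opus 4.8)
The plan is to reduce the statement to a cancellation problem for direct sums of translates of $E$ and $F$, exploiting that an elliptic curve carries the fixed-point-free automorphisms $t_a\colon x\mapsto x+a$ for $a\neq 0$.

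First I would make the Fourier--Mukai transform $(\_)^{[n]}$ completely explicit. For a smooth curve one has $X^{[n]}=X^{(n)}$, and the universal family is $\Xi_n\cong X\times X^{(n-1)}$ via $(x,D')\mapsto (x+D',x)$; under this identification $q\colon\Xi_n\to X$ becomes the projection $\pr_X$ and $p\colon\Xi_n\to X^{[n]}$ becomes the addition-of-divisors map $a\colon X\times X^{(n-1)}\to X^{(n)}$. Hence $E^{[n]}\cong Ra_*(\pr_X^*E)$. For any tuple $\mathbf a=(a_1,\dots,a_n)$ of pairwise distinct points of $X$, the assignment sending $x$ to the divisor $(x+a_1)+\dots+(x+a_n)$ defines a morphism $\rho_{\mathbf a}\colon X\to X^{(n)}$ whose image lies in the open locus of reduced divisors, over which $a$ is finite étale of degree $n$. Étale base change along $\rho_{\mathbf a}$ then identifies the $n$ sheets of the pullback of $a$ with the graphs of $x\mapsto x+a_j$, and yields a natural isomorphism
\[
\rho_{\mathbf a}^*E^{[n]}\ \cong\ \bigoplus_{j=1}^n t_{a_j}^*E .
\]

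Applying this to an isomorphism $E^{[n]}\cong F^{[n]}$, I obtain $\bigoplus_{j=1}^n t_{a_j}^*E\cong\bigoplus_{j=1}^n t_{a_j}^*F$ in $\Db(X)$, and crucially this holds \emph{for every} tuple of pairwise distinct points $a_1,\dots,a_n$. It remains to cancel the translates. Here I would use that, $X$ being smooth projective, $\Db(X)$ is a Krull--Schmidt category, so both sides have a well-defined multiset of indecomposable summands, and that each $t_a^*$ is an autoequivalence permuting indecomposables.

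The main obstacle is this final cancellation, and the delicate point is that some indecomposable summands are translation-invariant. Since every proper closed subgroup of an elliptic curve is finite, an indecomposable $G$ with $t_a^*G\cong G$ for infinitely many $a$ is automatically invariant under all of $X$; thus each non-invariant indecomposable summand $E_k$ has finite stabilizer, so the isomorphism class of $t_a^*E_k$ genuinely varies with $a$. Writing $E=E^{\mathrm{inv}}\oplus E^{\mathrm{var}}$ and likewise for $F$, I would fix $a_1,\dots,a_{n-1}$ and let $a_n$ range over $X$ minus a finite set: the summands $t_{a_n}^*E_k^{\mathrm{var}}$ move with $a_n$, while every other summand on either side is constant in $a_n$, so the uniqueness of the Krull--Schmidt decomposition forces the moving parts to match, giving $\bigoplus_k t_{a_n}^*E_k^{\mathrm{var}}\cong\bigoplus_l t_{a_n}^*F_l^{\mathrm{var}}$ and hence, after applying $t_{-a_n}^*$, the identity $E^{\mathrm{var}}\cong F^{\mathrm{var}}$. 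Cancelling these from the original isomorphism leaves $(E^{\mathrm{inv}})^{\oplus n}\cong(F^{\mathrm{inv}})^{\oplus n}$, whence $E^{\mathrm{inv}}\cong F^{\mathrm{inv}}$, and therefore $E\cong F$. The converse implication is trivial.
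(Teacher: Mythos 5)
Your reduction to the direct-sum statement coincides with the paper's: the isomorphism $\rho_{\mathbf a}^*E^{[n]}\cong\bigoplus_{j}t_{a_j}^*E$ is exactly \autoref{lem:psipullback} (proved there via \autoref{lem:tautbasechange}) applied to the translations $t_{a_1},\dots,t_{a_n}$, which have empty pairwise equalisers; your explicit model of $\Xi_n$ is correct but not needed for this. Where you genuinely differ is in how you cancel the translates. The paper (proof of \autoref{thm:fixedfree}) uses $n+1$ translations and all $n+1$ ``leave-one-out'' families, so that the purely formal \autoref{prop:Krull-Schmidt} applies; that proposition is proved by induction on the number of indecomposable summands in an arbitrary Krull--Schmidt category, with no geometric input at all. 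You instead keep $n$ translations but let one of them move continuously, separating ``constant'' from ``moving'' indecomposable summands. Granting your dichotomy (each indecomposable object is either invariant under all translations or has finite stabilizer), your bookkeeping does work: for all but finitely many $a_n$ no moving class on either side is isomorphic to a constant class on the other, so uniqueness of Krull--Schmidt decompositions forces $t_{a_n}^*E^{\mathrm{var}}\cong t_{a_n}^*F^{\mathrm{var}}$, and cancellation then gives $(E^{\mathrm{inv}})^{\oplus n}\cong(F^{\mathrm{inv}})^{\oplus n}$, hence $E\cong F$.

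The one step you assert rather than prove is the dichotomy itself, and as stated your justification has a gap: knowing that $\Stab(G)=\{a\in X\mid t_a^*G\cong G\}$ is a subgroup and that proper \emph{closed} subgroups of an elliptic curve are finite is not enough, because it is not automatic from the definition that $\Stab(G)$ is closed. This can be repaired in two ways: (a) in general, for flat families of sheaves on a projective variety the locus of parameters where the fibre is isomorphic to a fixed sheaf is the image of an open subscheme of a finite-type Hom-scheme, hence constructible by Chevalley, and a constructible subgroup of an algebraic group is closed; or (b) concretely on an elliptic curve, every object of $\D(X)$ is a direct sum of shifts of indecomposable sheaves, and by Atiyah's classification an indecomposable sheaf is either a torsion sheaf $\mathcal O_{X,p}/\mathfrak m_p^k$ (trivial stabilizer), a bundle of degree $d\neq 0$ (stabilizer contained in the $d$-torsion of $X$, by looking at determinants), or a bundle of degree $0$ (invariant under all translations). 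With that supplied, your proof is complete. Finally, note what each route buys: the paper's formal cancellation lemma applies to any variety with $n+1$ automorphisms having empty pairwise equalisers, which yields \autoref{cor:abelian} for abelian varieties of every dimension, whereas your dichotomy is genuinely one-dimensional --- on a higher-dimensional abelian variety stabilizers can be positive-dimensional proper subgroups (e.g.\ for pullbacks along a quotient by an abelian subvariety), and the constant/moving split collapses.
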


In summary, we now are now not very far away from a complete answer of Question \eqref{eq:Q}, in its generalised form for objects of the derived category. 

\autoref{thm:Psiintro} gives a complete affirmative answer for $\dim X\ge 2$. 
In the curve case, the picture is a bit more subtle.
Note first that on a curve $X$, every object $E \in D(X)$ is the direct sum of its shifted cohomology sheaves, and so by exactness of $(-)^{[n]}$ (see \autoref{prop:flatalgebraic}) it is enough to answer Question (\ref{eq:Q}) under the assumption $E, F \in \Coh(X)$.
Furthermore, the torsion part of a sheaf $E$ is easily recovered from $E^{[n]}$, and so the general form of Question (\ref{eq:Q}) reduces to the special form where $E$ and $F$ are vector bundles.

For $X=\IP^1$, as mentioned above, the answer to Question (\ref{eq:Q}) is negative. Note however, that the answer is affirmative for line bundles on $\IP^1$; see \autoref{rem:jetreconstruction}. For curves of genus $1$, we have an affirmative answer for arbitrary objects of the derived category by \autoref{thm:ellipticintro}. For curves of genus $g(X)\ge 2$, we have an affirmative answer for semi-stable vector bundles by \cite{Biswas-Nagaraj--reconstructioncurves}, and also for a slightly bigger class of vector bundles, namely those where the Harder--Narasimhan factors have slopes contained in a sufficiently small interval; see \cite[Prop.\ 2.1]{Biswas-Nagaraj--reconstructionsurfaces}.     

The remaining open question is thus:
\begin{question}
Do there exist pairs of unstable sheaves $E$ and $F$ on a curve of genus $g\ge 2$ with $E\not \cong F$ but $E^{[n]}\cong F^{[n]}$ for some $n\ge 2$?    
\end{question}

\subsection*{General conventions}

All our schemes and varieties are defined over the complex numbers $\IC$.
Given two schemes $X$ and $Y$, we write the projections from their product to the factors as $\pr_X\colon X\times Y\to X$ and $\pr_Y\colon X\times Y\to Y$. If $Z\subset X\times Y$ is a subscheme, we denote the restrictions of the projections to $Z$ by $\pr^Z_X$ and $\pr^Z_Y$, respectively.

For $X$ a scheme, let $\D(\QCoh(X))$ be the derived category of quasi-coherent $\reg_X$-modules. 
We write $\D(X):=\Perf(X)\subset \D(\QCoh(X))$ for the full subcategory of perfect complexes, i.e.\ those complexes which are locally quasi-isomorphic to bounded complexes of vector bundles.  
If $X$ is a smooth variety, $\D(X)$ is equivalent to the bounded derived category of coherent sheaves. We use the same notation for derived functors as for their non-derived versions. For example, given a morphism $f\colon X\to Y$, we write $f^*\colon \D(Y)\to \D(X)$ instead of $Lf^*\colon \D(Y)\to \D(X)$.

We say that a functor $F\colon \cC\to \cD$ is \textit{injective on isomorphism classes} if for all pairs of objects $C,C'\in \cC$, we have that $F(C)\cong F(C')$ implies that $C\cong C'$. 
A \textit{left-inverse} of a given functor $F\colon \cC\to \cD$ is a functor $G\colon \cD\to \cC$ such that $G\circ F\cong \id_{\cC}$ (often in the literature, this is called a \textit{quasi left-inverse} functor as we require the composition only to be isomorphic, not equal, to the identity). A functor admitting a left-inverse is injective on isomorphism classes and faithful. 

In this paper, $\IN$ denotes the set of positive integers, and $\IN_0$ denotes the set of non-negative integers. 

\subsection*{Acknowledgements}
The authors thank Ben Anthes, Pieter Belmans, John Christian Ottem and S\"onke Rollenske for helpful discussions and comments.

\section{Hilbert schemes of points and Fourier--Mukai transforms}

\subsection{Hilbert schemes of points and symmetric quotients}

From now on, $X$ will always be a smooth quasi-projective variety over $\IC$.
Given a non-negative integer $n$, the \textit{Hilbert scheme $X^{[n]}$ of $n$ points on $X$} is the fine moduli space of zero-dimensional closed subschemes of $X$ of length $n$. It is smooth if and only if $\dim X\le 2$ or $n\le 3$; see \cite{Fog, Cheah--cellularHilb}. 

We consider the cartesian product $X^n$ together with the action of the symmetric group $\sym_n$ given by permutation of the factors. The quotient $X^{(n)}:=X^n/\sym_n$ by that action is called the \textit{$n$-th symmetric product} of $X$. We denote the quotient morphism by $\pi\colon X^n\to X^{(n)}$ and write the points of the symmetric product in the form $x_1+\dots+x_n:=\pi(x_1,\dots,x_n)$. 

There is the \textit{Hilbert--Chow morphism}
\[
 \mu\colon X^{[n]}\to X^{(n)}\quad,\quad [\xi]\mapsto \sum_{x\in \xi}\ell(\reg_{\xi,x})\cdot x
\]
sending a length $n$ subscheme to its weighted support. If $X$ is a curve, the Hilbert--Chow morphism is an isomorphism. 

\subsection{Flatness of the universal family}

Being a fine moduli space, the Hilbert scheme $X^{[n]}$ comes equipped with a universal family $\Xi_n=\Xi\subset X^{[n]}\times X$ which is flat and finite of degree $n$ over $X^{[n]}$. In fact, it is also flat over $X$, as we show in the following.

\begin{theorem}\label{prop:flatalgebraic}
 For every smooth variety $X$ and every $n\in \IN$, the universal family $\Xi_n\subset X^{[n]}\times X$ is flat over $X$. 
\end{theorem}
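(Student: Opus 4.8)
The plan is to verify flatness of $q\colon \Xi_n\to X$ locally on the source, reducing first to the case of a punctual subscheme and then to the model $X=\IA^d$, where the family is trivialised by the translation action.

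First I would use that flatness is local on the source, so it suffices to show that $\reg_{\Xi_n,\zeta}$ is flat over $\reg_{X,x}$ for each point $\zeta=([\xi],x)\in\Xi_n$ lying over $x\in X$. Write $\supp(\xi)=\{x=y_1,\dots,y_r\}$ with lengths $n_i=\ell(\reg_{\xi,y_i})$, so $\sum_i n_i=n$. Choosing disjoint analytic (or \'etale) neighbourhoods of the $y_i$, and using that a subscheme close to $\xi$ splits uniquely into pieces near each $y_i$, one gets the standard local product decomposition $X^{[n]}\cong\prod_{i=1}^r X^{[n_i]}$ around $[\xi]$, under which $\Xi_n$ decomposes compatibly. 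Near $\zeta$ the marked point of the family lies close to $y_1=x$, so only the first factor varies with it, and I would deduce a local isomorphism $\Xi_n\cong\bigl(\prod_{i\ge 2}X^{[n_i]}\bigr)\times\Xi_{n_1}$ over $X$, with $q$ given by the corresponding map $q_1$ on the second factor. Since projection off a product factor is flat, flatness of $q$ at $\zeta$ follows from flatness of $q_1\colon\Xi_{n_1}\to X$ at $([\xi_1],x)$, where $\xi_1$ is the part of $\xi$ supported at $x$. This reduces the claim to the case where $\xi$ is punctual, i.e.\ $\supp(\xi)=\{x\}$.

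Next, for punctual $\xi$ the local behaviour of $q$ near $([\xi],x)$ depends only on the complete local ring $\widehat{\reg}_{X,x}\cong\IC[[t_1,\dots,t_d]]$, as $X$ is smooth of dimension $d$. Choosing \'etale coordinates, that is an \'etale morphism $X\to\IA^d$ sending $x$ to the origin, identifies this situation with the one for $\IA^d$ at the origin; since flatness can be checked after the faithfully flat passage to completions, it is enough to treat $X=\IA^d$. There I would use the translation action of the group $\IA^d$ to trivialise the family globally: setting $F=q^{-1}(0)=\{[\eta]\in(\IA^d)^{[n]}:0\in\eta\}$, the automorphism of $(\IA^d)^{[n]}\times\IA^d$ given by $([\xi],y)\mapsto([\xi-y],y)$, with $\xi-y$ the translate of $\xi$, sends the defining condition $y\in\xi$ of $\Xi_n$ to $0\in\xi-y$. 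Hence it restricts to an isomorphism $\Xi_n\isomor F\times\IA^d$ commuting with the projections to $\IA^d$, and as $F\times\IA^d\to\IA^d$ is flat, so is $q$.

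The step requiring the most care is the local product decomposition of $\Xi_n$ along the support points of $\xi$, together with the verification that it is compatible with the projection to $X$; granting this standard fact about Hilbert schemes of points, the remainder is just the elementary translation trivialisation over $\IA^d$ and faithfully flat descent of flatness. I expect this decomposition to be the main obstacle in a fully rigorous write-up, whereas the model computation itself is immediate.
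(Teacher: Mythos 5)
Your proof has the same skeleton as the paper's: localize near the support of $\xi$, split off a product of factors so that only the punctual piece at $x$ matters, and reduce to the standard model ($\IC^d$ resp.\ $\IA^d$), which is handled by homogeneity of affine space. Two execution differences are worth recording. First, over $\IA^d$ you trivialise the family globally by the translation automorphism $([\xi],y)\mapsto([\xi-y],y)$, obtaining $\Xi_n\cong q^{-1}(0)\times\IA^d$ over $\IA^d$; this is correct (check on $T$-points: the automorphism carries the incidence condition to the condition that the zero section factors through the translated family, so the image is the scheme-theoretic fibre $q^{-1}(0)$ times $\IA^d$), and it is cleaner and stronger than the paper's argument, which instead combines generic flatness with transitivity of $\Aut(\IC^d)$ to propagate flatness from a nonempty open subset to every point. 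Second, the paper never works algebraically: it first invokes GAGA to reduce the statement to flatness of the analytification, identifies that with the universal family of the Douady space, and only then runs the localization argument -- in the analytic category, where the disjoint neighbourhoods of the support points can genuinely be chosen isomorphic to open subsets of $\IC^d$, so the local product decomposition is an honest open cover and the punctual case over such a chart follows from the $\IC^d$ case by mere restriction.

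That analytic detour is precisely what your algebraic route has to replace, and this is where your write-up has a real (though fillable) hole -- and it is not the step you flag. Granting the product decomposition, you say the rest is "just the elementary translation trivialisation and faithfully flat descent of flatness". But descent along $\reg_{X,x}\to\widehat{\reg}_{X,x}$ only reduces flatness at $\zeta$ to flatness of the completed family over $\widehat{\reg}_{X,x}$; it does not identify that completed family, i.e.\ $\widehat{\reg}_{\Xi_{n_1},\zeta}$ as a $\widehat{\reg}_{X,x}$-algebra, with the corresponding completion for $\IA^d$ at the origin. That identification -- that the formal germ of $X^{[n_1]}$ at a punctual $[\xi_1]$, together with its universal family and the projection to $X$, depends only on $\widehat{\reg}_{X,x}$, equivalently is preserved by an \'etale map $X\to\IA^d$ -- is a deformation-theoretic fact of exactly the same nature and depth as the decomposition you do flag (both follow from pro-representability of the deformation functor of $\xi$ plus the observation that infinitesimal deformations of a punctual subscheme remain punctual). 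Note also that genuinely disjoint Zariski neighbourhoods of distinct points do not exist, so the \'etale variant of your decomposition already needs this same machinery, while the analytic variant gives the charts for free but then yields only analytic flatness, so you must add the GAGA step (and the comparison of $(X^{[n]})^{\an}$ with the Douady space) that your proposal never mentions. Either route closes the gap; as written, neither is complete.
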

\begin{proof}
By GAGA, a morphism of schemes of finite type over $\IC$ is flat if and only if its analytification is; see \cite[Expos\'e XII, Prop.\ 3.1]{SGAI}. Note that the analytification $\Xi^{\an}$ of $\Xi$ is the universal family of the \textit{Douady space} of $X^{\an}$, that means the moduli space of zero-dimensional analytic subspaces of $X^{\an}$ of length $n$; see \cite{Douady}. Hence, we can deduce 
\autoref{prop:flatalgebraic} from the analogous result in the category of complex spaces, which is \autoref{prop:flatanalytic} below.  
\end{proof}
\begin{theorem}\label{prop:flatanalytic}
For every complex manifold $M$ and every $n\in \IN$, the universal family $\Xi_n\subset M^{[n]}\times M$ of the Douady space $M^{[n]}$ is flat over $M$. 
\end{theorem}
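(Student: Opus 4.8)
The plan is to prove flatness analytically by reducing to a purely local statement, following the observation that flatness of $\Xi_n \to M$ is a local question on $M$. Since $M$ is a complex manifold, every point has a neighborhood biholomorphic to an open polydisc in $\IC^d$; by translation-invariance of the construction it suffices to understand the fibre of $\Xi_n$ over the origin $0 \in \IC^d$ and to show that $\reg_{\Xi_n}$ is flat over $\reg_{M,0}$. The universal family $\Xi_n$ sits inside $M^{[n]} \times M$, and I would first identify the relevant structure sheaf: a point of $\Xi_n$ lying over $0 \in M$ is a pair $([\xi], 0)$ with $[\xi] \in M^{[n]}$ a length-$n$ subscheme whose support contains $0$. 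The fibre $p^{-1}(0) \subset M^{[n]}$ of the projection $q \colon \Xi_n \to M$ is thus the \emph{incidence locus} of subschemes passing through $0$.

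The key technical step is to exhibit a local description of $\Xi_n$ near such a point that manifestly displays flatness over $M$. The most natural approach is to use the universal property of the Douady space together with the smoothness of $M$ to construct an explicit local model. I would fix $[\xi_0] \in M^{[n]}$ with $0 \in \supp(\xi_0)$, and choose local coordinates on $M$ centred at $0$. The crucial idea is that, because $M \cong \IC^d$ locally and the whole setup is invariant under the translation action of the additive group $(\IC^d, +)$ on $M$, there is a local product structure: near $([\xi_0], 0)$, the family $\Xi_n$ looks like the product of a translation direction (the $M$-direction, accounting for where we translate the configuration) with the fibre. More precisely, the translation action gives a local trivialisation making $\Xi_n \to M$ into a locally trivial fibre bundle in a neighborhood, and a locally trivial bundle (indeed any product projection) is automatically flat. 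The main obstacle is making this translation-trivialisation argument precise at the level of structure sheaves, since the Douady space carries a nontrivial scheme/analytic-space structure at non-reduced or clustered points, and one must check that the translation action is genuinely free and that it identifies a neighborhood of $([\xi_0],0)$ in $\Xi_n$ with a product $U \times V$ compatibly with the projection to $M$.

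Concretely, I would argue as follows. Consider the translation action $\tau \colon \IC^d \times M \to M$ (locally), inducing an action on the Douady space $M^{[n]}$ and hence on $M^{[n]} \times M$ preserving $\Xi_n$. The map sending $(v, [\xi], x) $ to the translate still lands in $\Xi_n$ when $(x - v) \in \supp(\text{translate})$; the point is to set up a local isomorphism
\[
\Phi \colon W \times \IC^d \isomor \Xi_n\big|_{\text{nbhd}}, \qquad (w, v) \mapsto \big(\tau_v \cdot \xi(w),\, \tau_v \cdot x(w)\big),
\]
where $W$ parametrises configurations with a marked point pinned at the origin and $\IC^d$ records the translation. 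Under $\Phi$ the projection $q$ to $M$ becomes the projection to the $\IC^d$-factor followed by $\tau$, which is a submersion with smooth target, hence flat. Thus flatness of $\Xi_n \to M$ reduces to flatness of a trivial projection $W \times \IC^d \to \IC^d$, which is immediate. The delicate part, which I expect to be the real heart of the proof, is verifying that $\Phi$ is a local isomorphism of complex spaces (not merely a set-theoretic bijection) precisely at the points where $\xi$ has a component supported at the origin; here one must use the universal property of the Douady space to define $\Phi$ and its inverse functorially, and invoke the freeness of the translation action near such a configuration to guarantee that $\Phi$ is a biholomorphism of complex-analytic spaces rather than just a continuous bijection.
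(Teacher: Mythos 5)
Your treatment of the case $M=\IC^d$ is essentially correct, and it is a genuinely different route from the paper's for that case: the translation action of $(\IC^d,+)$ on the Douady space (defined functorially via classifying maps) gives an isomorphism $\IC^d\times q^{-1}(0)\xrightarrow{\sim}\Xi_n$ over $\IC^d$, $(v,\xi)\mapsto (\tau_v(\xi),v)$, and a product projection is flat. The paper instead proves the $\IC^d$ case by generic flatness plus transitivity of $\Aut(\IC^d)$: flatness holds over some nonempty open set, and homogeneity propagates it to every point. Your version yields more (an actual global product structure over $\IC^d$), at the cost of the verification you flag, namely that $\Phi$ and its inverse $(\eta,x)\mapsto(\tau_{-x}(\eta),x)$ are morphisms of complex spaces; this can indeed be done with the universal property, so that part of your plan is sound.

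The genuine gap is the reduction of a general complex manifold $M$ to this model. You assert that ``the whole setup is invariant under the translation action of the additive group $(\IC^d,+)$ on $M$'', but no such action exists on $M$: translations do not preserve a polydisc chart, and by the identity theorem there is no automorphism of $M$ that translates points near $x$ while fixing the rest of $M$, so the trivialisation cannot be performed ``locally near $x$''. This matters because a point of $\Xi_n$ over $x$ is a pair $(\xi,x)$ in which $\xi$ may have components supported far from $x$, spread over several charts; consequently $q^{-1}(U)$, for a chart $U\ni x$, is \emph{not} the universal family of $U^{[n]}$, and ``flatness is local on $M$'' does not by itself reduce the problem to the chart. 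The missing ingredient is the local structure of the Douady space near such a $\xi$: writing $\mu(\xi)=n_0\cdot x+n_1\cdot x_1+\dots+n_k\cdot x_k$ and choosing pairwise disjoint charts $U_0,\dots,U_k$ around the support points, a neighbourhood of $(\xi,x)$ in $\Xi_n$ is isomorphic to $\Xi^{U_0}_{n_0}\times U_1^{[n_1]}\times\dots\times U_k^{[n_k]}$, where $\Xi^{U_0}_{n_0}$ is the universal family of $U_0^{[n_0]}$; the projection to $M$ then factors as the (flat) product projection onto $\Xi^{U_0}_{n_0}$, followed by $\Xi^{U_0}_{n_0}\to U_0$, followed by the open embedding $U_0\hookrightarrow M$. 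Only the middle map needs an argument, and since $U_0^{[n_0]}$ and $\Xi^{U_0}_{n_0}$ are open subspaces of the Douady space of $\IC^d$ and of its universal family, your translation argument applies there. This decomposition step is exactly the second half of the paper's proof and is absent from your proposal; as written, your argument establishes the theorem only for $M=\IC^d$ and open subsets thereof, not for an arbitrary complex manifold.
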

\begin{proof}
We first prove the assertion in the case that $M=\IC^d$. By generic flatness, there is a non-empty open subset $U\subset M$ such that the restriction of $\Xi_n$ is flat over $U$. Let $(\xi,x)\in \Xi_n\subset M^{[n]}\times M$ with $x\not\in U$. The action of $\Aut(\IC^d)$ is transitive. Hence, there exists an $\phi\in \Aut(\IC^d)$ with $\phi(x)\in U$. Let $\phi^{[n]}$ denote the induced automorphism of $M^{[n]}$. As $\phi^{[n]}\times \phi$ is an automorphism of $\Xi_n$, the flatness of $\reg_{\Xi_n,(\xi,x)}$ over $\reg_{M,x}$ follows from the flatness of $\reg_{\Xi_n,(\phi^{[n]}(\xi),\phi(x))}$ over $\reg_{M,\phi(x)}$. 

Let now $M$ be an arbitrary complex manifold of dimension $d$, and let $(\xi,x)\in \Xi_n$. Write $x_0=x$ and $\mu(\xi)=n_0\cdot x+n_1\cdot x_1+\dots +n_tx_t$ where $\mu\colon M^{[n]}\to M^{(n)}$ denotes the Douady--Barlet morphism, which is the analytic analogue of the Hilbert--Chow morphism; see \cite{Barlet}. Now, choose pairwise disjoint open neighbourhoods $U_0, \dots, U_k$ of $x_0, \dots x_k$ such that every $U_i$ is isomorphic to an open subset of $\IC^d$. Then, $U_0^{[n_0]}\times U_1^{[n_1]}\times \dots \times U_k^{[n_k]}$ is an open neighbourhood of $\xi\in M^{[n]}$. Hence, $\Xi^{U_0}_{n_0}\times U_1^{[n_1]}\times \dots \times U_k^{[n_k]}$, where $\Xi^{U_0}_{n_0}\subset U_0^{[n_0]}\times U_0$ denotes the universal family of $U_0^{[n_0]}$, is an open neighbourhood of $(\xi, x)$ in $\Xi_n$. The restriction of the projection $\pr^{\Xi_n}_X\colon \Xi_n\to X$ to this open neighbourhood is given by the composition
\[
\Xi^{U_0}_{n_0}\times U_1^{[n_1]}\times \dots \times U_k^{[n_k]}\to \Xi^{U_0}_{n_0}\to U_0\hookrightarrow M\,.  
\]
The first morphism is the projection to the first factor, hence flat. The second map is the projection from the universal family. As $U_0$ is an open subset of $\IC^d$, for which we already proved the assertion, this morphism is flat too. The third morphism is the open embedding, hence flat. It follows that the whole composition is flat, which is what we needed to show.  
\end{proof}

\subsection{Canonical Fourier--Mukai transforms}\label{subsect:tautological}

Given a smooth quasi-projective variety $X$ and a positive integer $n$, we define the \textit{tautological functor} 
\[
 (\_)^{[n]}:=\pr^{\Xi}_{X^{[n]}*}\circ \pr^{\Xi*}_{X}\colon \D(X)\to \D(X^{[n]})\,.
\]
The functor is well-defined in that it preserves perfect complexes. Indeed, every pull-back preserves perfect complexes, and the push-forward $\pr^{\Xi}_{X^{[n]}*}$ preserves perfect complexes as well since $\pr^{\Xi}_X$ is flat and finite. 
Note that $\pr^{\Xi}_{X^{[n]}*}$ and $\pr^{\Xi*}_{X}$ are both already exact on the level of the abelian categories of coherent sheaves, before deriving, since $\pr^{\Xi}_{X^{[n]}}$ is finite and $\pr^{\Xi}_{X}$ is flat by \autoref{prop:flatalgebraic}. 
Hence, $(\_)^{[n]}$ restricts to an exact functor
\[
 (\_)^{[n]}:=\pr^{\Xi}_{X^{[n]}*}\circ \pr^{\Xi*}_{X}\colon \Coh(X)\to \Coh(X^{[n]})\,;
\]
see \cite[Sect.\ 2.1]{Sca2} for a different proof of this fact in the case that $X$ is a surface.
Let $E\in \Coh(X)$ be a vector bundle of rank $r$.
The fact that $\pr^{\Xi}_{X^{[n]}}$ is flat and finite of degree $n$ implies that $E^{[n]}$ is a vector bundle of rank $rn$. 
By the projection formula, the tautological functor can be identified with the Fourier--Mukai transform 
\[
 (\_)^{[n]}\cong \FM_{\reg_{\Xi}}=\pr^{X\times X^{[n]}}_{X^{[n]}*}\bigl(\reg_\Xi\otimes \pr^{X\times X^{[n]}*}_{X}(\_)\bigr)
\]
along the structure sheaf of the universal family. Another much-studied functor is the Fourier--Mukai transform
\[
 \FF_n:=\FM_{\cI_\Xi}=\pr^{X\times X^{[n]}}_{X^{[n]}*}\bigl(\cI_\Xi\otimes \pr^{X\times X^{[n]}*}_{X}(\_)\bigr)
\]
along the ideal sheaf of the universal family. We only consider the functor $\FF_n$ in the case that $X$ is projective since only in this case does it preserve perfect complexes. To see this, we note that the short exact sequence 
\[
 0\to \cI_\Xi\to \reg_{X^{[n]}\times X}\to \reg_\Xi\to 0
\]
induces, for every perfect complex $E\in \D(X)$, an exact triangle
\begin{align*}
 \FM_{\cI_\Xi}(E)\to \FM_{\reg_{X^{[n]}\times X}}(E)\to \FM_{\reg_\Xi}(E)\to \FM_{\cI_\Xi}(E)[1] 
\end{align*}
consisting, a priori, of objects of $\D(\QCoh(X^{[n]}))$. We have already seen that $E^{[n]}\cong \FM_{\reg_\Xi}(E)$ is perfect. Furthermore, we have $\FM_{\reg_{X^{[n]}\times X}}(E)\cong \reg_{X^{[n]}}\otimes_{\IC}\Ho^*(E)$. This is again a perfect complex since we assume $X$ to be projective which implies that $\Ho^*(E)$ is a finite dimensional graded vector space. Since the subcategory $\D(X^{[n]})\subset \D(\QCoh(X^{[n]}))$ of perfect complexes is triangulated, it follows that $\FF_n(E)\cong \FM_{\cI_\Xi}(E)$ is perfect too. Hence, we have a well-defined functor $\FF_n\colon \D(X)\to \D(X^{[n]})$.

\begin{lemma}\label{lem:tautbasechange}
Let $T$ be a scheme, $\cZ\subset T\times X$ a flat family of length $n$ subschemes of $X$, and $\psi\colon T\to X^{[n]}$ the classifying morphism for $\cZ$. Then we have an isomorphism of functors
\[
\psi^*\circ (\_)^{[n]}\cong\pr^{\cZ}_{T*}\circ \pr^{\cZ*}_X\cong \FM_{\reg_{\cZ}} \,.
\]
\end{lemma}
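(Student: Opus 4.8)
The plan is to realise $\psi^*\circ(\_)^{[n]}$ as a base change of the push--pull defining $(\_)^{[n]}$, and then to identify the result with $\FM_{\reg_\cZ}$ by the very same projection-formula argument used for $(\_)^{[n]}\cong\FM_{\reg_\Xi}$ in \autoref{subsect:tautological}.

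First I would record the relevant geometry. Since $\psi$ is the classifying morphism of $\cZ$, the universal property of $X^{[n]}$ says precisely that $\cZ$ is the scheme-theoretic preimage $(\psi\times\id_X)^{-1}(\Xi)$; that is, the square
\[
\begin{CD}
\cZ @>{g}>> \Xi\\
@VVV @VVV\\
T\times X @>{\psi\times\id_X}>> X^{[n]}\times X
\end{CD}
\]
is Cartesian, with $g$ the induced morphism $(t,x)\mapsto(\psi(t),x)$. Pasting this with the standard Cartesian square that exhibits $T\times X$ as the base change of $\pr_{X^{[n]}}\colon X^{[n]}\times X\to X^{[n]}$ along $\psi$, the pasting lemma for Cartesian squares yields the Cartesian square
\[
\begin{CD}
\cZ @>{g}>> \Xi\\
@V{\pr^{\cZ}_T}VV @VV{\pr^{\Xi}_{X^{[n]}}}V\\
T @>{\psi}>> X^{[n]}
\end{CD}
\]
in which the vertical maps are indeed the restricted projections. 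Projecting $\cZ$ and $\Xi$ to $X$ moreover gives $\pr^{\cZ}_X=\pr^{\Xi}_X\circ g$, whence $g^*\circ\pr^{\Xi*}_X\cong\pr^{\cZ*}_X$ by pseudofunctoriality of pullback.

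Next I would apply base change along the second square. The morphism $\pr^{\Xi}_{X^{[n]}}$ is finite and flat, so it is proper and the square is automatically Tor-independent; the base-change morphism $\psi^*\circ\pr^{\Xi}_{X^{[n]}*}\to\pr^{\cZ}_{T*}\circ g^*$ is therefore an isomorphism. Combining this with the identity from the previous paragraph gives
\[
\psi^*\circ(\_)^{[n]}=\psi^*\circ\pr^{\Xi}_{X^{[n]}*}\circ\pr^{\Xi*}_X\cong\pr^{\cZ}_{T*}\circ g^*\circ\pr^{\Xi*}_X\cong\pr^{\cZ}_{T*}\circ\pr^{\cZ*}_X\,,
\]
which is the first claimed isomorphism. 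For the second, writing $\iota\colon\cZ\hookrightarrow T\times X$ for the closed embedding so that $\pr^{\cZ}_X=\pr_X\circ\iota$ and $\pr^{\cZ}_T=\pr_T\circ\iota$, the projection formula $\iota_*\iota^*\pr_X^*E\cong\reg_\cZ\otimes\pr_X^*E$ yields $\pr^{\cZ}_{T*}\pr^{\cZ*}_XE\cong\pr_{T*}(\reg_\cZ\otimes\pr_X^*E)=\FM_{\reg_\cZ}(E)$, exactly as in \autoref{subsect:tautological}.

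The only step that needs genuine care is the base change: since $\psi$ is an arbitrary morphism it need not be flat, so flat base change is not directly available. The point to exploit is that it is the \emph{opposite} leg $\pr^{\Xi}_{X^{[n]}}$ that is flat (indeed finite flat of degree $n$), which forces Tor-independence and lets the proper base change theorem apply regardless of $\psi$. Everything else is formal manipulation of Cartesian squares together with the projection formula.
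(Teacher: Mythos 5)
Your proof is correct and follows essentially the same route as the paper: the paper's entire proof is ``base change along the cartesian diagram with flat vertical arrows,'' which is exactly your square $\cZ\to\Xi$, $T\xrightarrow{\psi}X^{[n]}$, together with the identification $\pr^{\cZ}_X=\pr^{\Xi}_X\circ g$. Your additional care --- justifying cartesianness via the universal property and pasting, invoking Tor-independent proper base change because $\psi$ need not be flat while the finite flat leg $\pr^{\Xi}_{X^{[n]}}$ guarantees Tor-independence, and spelling out the projection-formula identification with $\FM_{\reg_{\cZ}}$ --- simply makes explicit what the paper leaves implicit.
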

\begin{proof}
This follows from base change along this cartesian diagram with flat vertical arrows:
\[
\begin{gathered}[b]
\xymatrix{
 \cZ \ar^{\psi\times \id_X}[r]  \ar_{\pr_T^{\cZ}}[d] & \Xi\ar^{\pr^\Xi_{X^{[n]}}}[d] \ar^{\pr^\Xi_X}[r]& X   \\
 \cZ \ar^{\psi}[r] & X^{[n]} \,. & 
}\\[-\dp\strutbox]
\end{gathered}
\qedhere
\]

\end{proof}

\section{Reconstruction using the symmetric product}\label{sect:symstack}

\subsection{Reconstruction for reflexive sheaves}\label{subsect:reflexive}

The following construction, which reconstructs reflexive sheaves on a smooth quasi-projective variety $X$ of dimension $d\ge 2$ from their associated tautological sheaves on $X^{[n]}$, is inspired by \cite[Sect.\ 1]{Stapletontaut}.  

Let $X^{[n]}_0\subset X^{[n]}$ be the open locus of reduced subschemes. Let $X^n_0\subset X^n$ the open complement of the big diagonal, and let $X_0^{(n)}=\pi(X_0^n)\subset X^{(n)}$. This means
\begin{align*}
 X^n_0&=\bigl\{(x_1,\dots,x_n)\in X^n\mid \text{ the $x_i$ are pairwise distinct } \bigr\}\,,\\ 
 X^{(n)}_0&=\bigl\{x_1+\dots+x_n\in X^{(n)}\mid \text{ the $x_i$ are pairwise distinct } \bigr\}\,. 
\end{align*}
The Hilbert--Chow morphism induces an isomorphism $X_0^{[n]}\xrightarrow \sim X_0^{(n)}$. Let $j\colon X_0^{(n)}\hookrightarrow X^{[n]}$ be the open embedding induced by the inverse of that isomorphism. For $i=1,\dots, n$, let $\pr_i\colon X^n\to X$ denote the projection to the $i$-th factor, and let $\pr_i^0\colon X_0^n\to X$ be the restriction of that projection.  

\begin{lemma}\label{lem:openpullback}
We have an isomorphism of functors \[\pi_0^*\circ j^*\circ(\_)^{[n]} \cong \bigoplus_{i=1}^n\pr_i^{0*}\colon \D(X)\to \D(X_0^n)\,.\]
\end{lemma}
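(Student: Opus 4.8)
The plan is to deduce the statement from the base-change isomorphism of \autoref{lem:tautbasechange}, applied to an explicit flat family over $X_0^n$ whose fibres are the reduced unions of the $n$ distinct points. I would set $\psi := j\circ \pi_0 \colon X_0^n \to X^{[n]}$ and exhibit the family it classifies. For each $i$ let $\Gamma_i \subset X_0^n \times X$ be the graph of $\pr_i^0 \colon X_0^n \to X$, and put $W := \bigsqcup_{i=1}^n \Gamma_i$. Since the coordinate maps $\pr_1^0,\dots,\pr_n^0$ take pairwise distinct values at every point of $X_0^n$, the graphs $\Gamma_i$ are pairwise disjoint, so $W\to X_0^n$ is finite \'etale of degree $n$; in particular $W$ is a flat family of length $n$ subschemes of $X$.

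Next I would identify the classifying morphism of $W$ with $\psi$. As each fibre of $W$ over $X_0^n$ is a reduced length-$n$ subscheme, the classifying morphism $\psi_W$ factors through the reduced locus $X_0^{[n]}$, and composing with the Hilbert--Chow isomorphism $X_0^{[n]} \xrightarrow{\sim} X_0^{(n)}$ sends $(x_1,\dots,x_n)$ to its weighted support $x_1+\dots+x_n = \pi_0(x_1,\dots,x_n)$. By the definition of $j$ this gives $\psi_W = j\circ\pi_0 = \psi$, and hence $W = (\psi\times\id_X)^*\Xi$ is exactly the family classified by $\psi$.

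I would then apply \autoref{lem:tautbasechange} with $T = X_0^n$ and $\cZ = W$, obtaining $\pi_0^*\circ j^*\circ(\_)^{[n]} = \psi^*\circ(\_)^{[n]} \cong \pr^{W}_{X_0^n *}\circ \pr^{W*}_X$. Finally I would split this along $W = \bigsqcup_i \Gamma_i$: pushforward along a disjoint union is the direct sum of the pushforwards, the projection $\pr^{W}_{X_0^n}$ restricts to an isomorphism $\Gamma_i \cong X_0^n$, and under this isomorphism $\pr^{W}_X$ restricts to $\pr_i^0$. This yields $\bigoplus_{i=1}^n \pr_i^{0*}$, as claimed.

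The only real content --- and the step I would treat most carefully --- is the identification $W = (\psi\times\id_X)^*\Xi$, i.e.\ that over the locus of distinct points the universal family pulls back to the disjoint union of the graphs of the coordinate projections. The disjointness, which makes $W$ a genuine flat degree-$n$ family, relies on the distinctness of the coordinates on $X_0^n$, and the equality of classifying maps is pinned down through the Hilbert--Chow morphism rather than by a direct scheme-theoretic calculation. I expect no dimension hypothesis to be needed here, the statement being equally valid when $X$ is a curve.
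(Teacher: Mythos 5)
Your proposal is correct and follows essentially the same route as the paper: both identify $j\circ\pi_0$ as the classifying morphism of the disjoint union of graphs $\bigsqcup_{i=1}^n\Gamma_i\subset X_0^n\times X$, invoke \autoref{lem:tautbasechange}, and then split the resulting Fourier--Mukai transform along the disjoint union into $\bigoplus_{i=1}^n\pr_i^{0*}$. The only cosmetic difference is that you spell out the identification of classifying maps via the Hilbert--Chow isomorphism, which the paper compresses into the remark that $X_0^{(n)}$ is the fine moduli space of reduced length $n$ subschemes.
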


\begin{proof}
This is stated in \cite[Lem.\ 1.1]{Stapletontaut} in the case that $E$ is a vector bundle and $X$ is a surface. The proof of our more general statement is exactly the same. For convenience, we quickly reproduce the proof in slightly different words.

Note that $X_0^{(n)}$ is the fine moduli space of reduced length $n$ subschemes on $X$, and the quotient $\pi_0\colon X_0^n\to X_0^{(n)}$ is the classifying morphism for the family $Z=\bigsqcup_{i=1}^n\Gamma_{i}\subset X_0^n\times X$ given by the disjoint union of the graphs $\Gamma_i$ of the projections $\pr^0_i\colon X_0^n\to X$. Hence, by \autoref{lem:tautbasechange}, we have $\pi_0^*\circ j^*\circ(\_)^{[n]}\cong \FM_{\reg_Z}$. Since $\reg_Z\cong \bigoplus_{i=1}^n\reg_{\Gamma_{i}}$, and $\FM_{\reg_{\Gamma_{i}}}\cong \pr_i^{0*}$, we get the asserted isomorphism. 
\end{proof}

\begin{theorem}\label{thm:reflexive}
Let $X$ be a smooth projective variety of dimension $d\ge 2$. Then, for every pair $E,F\in\Refl(X)\subset \Coh(X)$ of reflexive sheaves, we have 
\[
 E^{[n]}\cong F^{[n]}\quad \Longrightarrow \quad E\cong F\,.
\] 
\end{theorem}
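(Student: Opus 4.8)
The plan is to use \autoref{lem:openpullback} to transport the hypothesis to an identity of honest sheaves on $X_0^n$, to isolate a single summand by restricting to a slice, and then to reconstruct $E$ from that restriction using reflexivity. Since $E$ and $F$ are reflexive, $E^{[n]}$ and $F^{[n]}$ are coherent sheaves by \autoref{prop:flatalgebraic}, so the given isomorphism $E^{[n]}\cong F^{[n]}$ is an isomorphism of sheaves. The functor $\pi_0^*\circ j^*$ is exact, being the composition of a restriction to an open subset with the pullback along the étale cover $\pi_0$, so applying it and invoking \autoref{lem:openpullback} yields an isomorphism of coherent sheaves
\[
\bigoplus_{i=1}^n \pr_i^{0*}E\;\cong\;\bigoplus_{i=1}^n \pr_i^{0*}F
\]
on $X_0^n$. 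The remaining task is to extract a single summand.

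Next I would restrict along a copy of $X$ sitting inside $X_0^n$ obtained by fixing all but the first coordinate. Let $U\subset X$ be the dense open locus on which both $E$ and $F$ are locally free, choose pairwise distinct points $x_2,\dots,x_n\in U$, and set $V:=X\setminus\{x_2,\dots,x_n\}$ together with the embedding
\[
\sigma\colon V\hookrightarrow X_0^n\quad,\quad x\mapsto (x,x_2,\dots,x_n)\,,
\]
which lands in $X_0^n$ because the coordinates are pairwise distinct. The composite $\pr_1^0\circ\sigma$ is the open inclusion $V\hookrightarrow X$, which is flat, so $\sigma^*\pr_1^{0*}E\cong E|_V$ with no derived correction; for $i\ge 2$ the composite $\pr_i^0\circ\sigma$ is the constant map to $x_i$, and since $E$ is locally free at $x_i$ we obtain $\sigma^*\pr_i^{0*}E\cong \reg_V^{\oplus r}$ with $r=\rank E$ (again with no higher Tor). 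Applying $\sigma^*$ to the displayed isomorphism therefore gives an isomorphism of sheaves
\[
E|_V\oplus \reg_V^{\oplus(n-1)\rank E}\;\cong\;F|_V\oplus \reg_V^{\oplus(n-1)\rank F}
\]
on $V$, and comparing generic ranks shows $\rank E=\rank F=:r$.

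Finally I would push forward along the open inclusion $\jmath\colon V\hookrightarrow X$. Because $\{x_2,\dots,x_n\}$ has codimension $d\ge 2$ in the normal variety $X$, reflexivity gives $\jmath_*(E|_V)\cong E$ and $\jmath_*(F|_V)\cong F$, while $\jmath_*\reg_V\cong \reg_X$; as $\jmath_*$ preserves finite direct sums this produces an isomorphism
\[
E\oplus \reg_X^{\oplus(n-1)r}\;\cong\;F\oplus \reg_X^{\oplus(n-1)r}
\]
on the projective variety $X$. Since $\Coh(X)$ is a Krull--Schmidt category (all endomorphism algebras are finite-dimensional over $\IC$), the common summand $\reg_X^{\oplus(n-1)r}$ may be cancelled, yielding $E\cong F$.

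The two steps where the hypotheses are genuinely used, and where I expect whatever subtlety there is to concentrate, are the reflexive extension — which needs $d\ge 2$ so that finitely many points have codimension at least two, and needs $E,F$ reflexive so that $\jmath_*(E|_V)\cong E$ — and the final cancellation, for which projectivity of $X$ supplies the Krull--Schmidt property. Everything else is a formal consequence of \autoref{lem:openpullback}, together with the harmless bookkeeping that the slice restrictions introduce no derived terms once the fixed points are chosen in the locally free locus.
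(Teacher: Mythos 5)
Your proof is correct, and it follows a genuinely different route through the middle of the argument than the paper does, even though the endpoints coincide. The paper's proof goes: after applying \autoref{lem:openpullback}, it extends the isomorphism $\bigoplus_i\pr_i^{0*}E\cong\bigoplus_i\pr_i^{0*}F$ from $X_0^n$ to all of $X^n$, using that flat pull-backs preserve reflexivity (so the sheaves $\bigoplus_i\pr_i^*E$, $\bigoplus_i\pr_i^*F$ are reflexive on $X^n$) and that the big diagonal has codimension $d\ge 2$; it then restricts along the small diagonal $\delta\colon X\hookrightarrow X^n$ to obtain $E^{\oplus n}\cong F^{\oplus n}$, and concludes by Krull--Schmidt. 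You instead swap the order of restriction and extension: you restrict first, along the slice $\sigma\colon V\hookrightarrow X_0^n$, $x\mapsto(x,x_2,\dots,x_n)$, obtaining $E|_V\oplus\reg_V^{\oplus(n-1)r}\cong F|_V\oplus\reg_V^{\oplus(n-1)r}$, and only then extend, over the finitely many removed points $x_2,\dots,x_n$, using reflexivity of $E$ and $F$ on $X$ itself; Krull--Schmidt cancellation of $\reg_X^{\oplus(n-1)r}$ finishes. Both arguments use the same three ingredients (\autoref{lem:openpullback}, codimension-$\ge 2$ extension of reflexive sheaves, Krull--Schmidt on the projective $X$), and both use $d\ge 2$ in an essential and parallel way (codimension of the big diagonal in $X^n$, respectively of finitely many points in $X$). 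What your version buys: it avoids the auxiliary fact that flat pull-back preserves reflexivity, and it applies the reflexive-extension lemma only on $X$ rather than on $X^n$, making the codimension count elementary. What it costs: the non-canonical choice of points $x_2,\dots,x_n$ in the locally free locus (together with the small bookkeeping that the constant maps $\pr_i^0\circ\sigma$ introduce no derived corrections, and the separate step identifying $\rank E=\rank F$ before cancelling), whereas the paper's small-diagonal restriction is choice-free and yields the cleaner intermediate statement $E^{\oplus n}\cong F^{\oplus n}$.
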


\begin{proof}
Let $E^{[n]}\cong F^{[n]}$, and write $\alpha\colon X^n_0\hookrightarrow X^n$ for the open embedding. Then, by \autoref{lem:openpullback}, 
\begin{align}\label{eq:alphaequal}
\alpha^*(\bigoplus_{i=1}^n\pr_i^*E)\cong \bigoplus_{i=1}^n\pr_i^{0*}E\cong \pi_0^*j^*E^{[n]}\cong \pi_0^*j^*F^{[n]} \cong \bigoplus_{i=1}^n\pr_i^{0*}F\cong \alpha^*(\bigoplus_{i=1}^n\pr_i^*F)\,.   
\end{align}
The sheaves $\bigoplus_{i=1}^n\pr_i^*E$ and $\bigoplus_{i=1}^n\pr_i^*F$ are reflexive since flat pull-backs preserve reflexivity; see \cite[Prop.\ 1.8]{Hartshorne--stable}. Note that the codimension of the complement of $X^n_0$ in $X^n$ is $d\ge 2$. Hence, for a reflexive sheaf $\cE$ on $X^n$, we have $\cE\cong \alpha_*\alpha^* \cE$; see \cite[Prop.\ 1.11]{Hartshorne--generalized}. Combining this with 
\eqref{eq:alphaequal} gives
\begin{align}\label{eq:Cequal}
\bigoplus_{i=1}^n\pr_i^*E\cong  \alpha_*\alpha^*(\bigoplus_{i=1}^n\pr_i^*E) \cong  \alpha_*\alpha^*(\bigoplus_{i=1}^n\pr_i^*F)\cong  \bigoplus_{i=1}^n\pr_i^*F\,.
\end{align}
Let $\delta\colon X\hookrightarrow X^n$ be the embedding of the small diagonal. We have $\delta^*\pr_i^*\cong \id$ for every $i=1,\dots,n$. Combining this with \eqref{eq:Cequal} gives 
\[
E^{\oplus n}\cong \delta^*(\bigoplus_{i=1}^n\pr_i^*E)\cong \delta^*(\bigoplus_{i=1}^n\pr_i^*F)\cong F^{\oplus n}\,.
\]
The category $\Coh(X)$ is Krull-Schmidt; see \cite{Atiyah--Krull-Schmidt}. Hence, $E^{\oplus n}\cong F^{\oplus n}$ implies $E\cong F$.
\end{proof}

\subsection{Equivariant sheaves and the McKay correspondence}

We quickly collect some facts about equivariant sheaves, their derived categories, and functors between them for later use. For further details, the reader may consult, among others, \cite[Sect.\ 4]{BKR}, \cite{Elagin}, or \cite[Sect.\ 2.2]{Krug--remarksMcKay}.
Let $Y$ be a smooth variety equipped with an action of a finite group $G$. 
A \textit{$G$-equivariant sheaf} on $Y$ is a pair $(F,\lambda)$ consisting of a 
coherent sheaf $F\in \Coh(Y)$ and a \textit{$G$-linearisation} $\lambda$, which means a family $\{\lambda_g\colon F\xrightarrow\sim g^*F\}_{g\in G}$ of isomorphisms such that for every pair $g,h\in G$ the following diagram commutes:
\[
\xymatrix{
 F \ar^{\lambda_g}[r]  \ar@/_6mm/^{\lambda_{hg}}[rrr] & g^*F \ar^{g^*\lambda_h}[r] & g^*h^*F\ar^{\cong}[r] & (hg)^*F\,.  
} 
\]
We obtain the abelian category $\Coh_G(Y)$ of equivariant sheaves, where a morphism between two equivariant sheaves is a morphism of the underlying coherent sheaves that commutes with the linearisations. We denote the bounded derived category of $\Coh_G(Y)$ by $\D_G(Y)$. 

Let $Z$ be a second smooth variety equipped with a $G$-action, and let $f\colon Y\to Z$ be a $G$-equivariant morphism. Then, if $(E,\lambda)\in \Coh(Z)$ is a $G$-equivariant sheaf, the pull-back $f^*E$ canonically inherits a $G$-linearisation induced by $\lambda$. This gives a functor $f^*\colon \Coh_G(Z)\to \Coh_G(Y)$ together with its derived version $f^*\colon \D_G(Z)\to \D_G(Y)$. If $f$ is proper, we also get a push-forward $f_*\colon \D_G(Y)\to \D_G(Z)$.  

If $H\le G$ is a subgroup, we have an exact functor $\Res_G^H\colon \Coh_G(Y)\to\Coh_H(Y)$ given by restricting the linearisations to $H$. This functor is exact, so it induces a functor on the derived categories $\Res_G^H\colon \D_G(Y)\to \D_H(Y)$.

If the $G$-action on $Y$ is trivial, a $G$-linearisation of a sheaf $E\in \Coh(Y)$ is the same as a $G$-action on $E$, i.e.\ a compatible family of $G$-actions on $E(U)$ for every open subset $U\subset Y$. Hence, we can take the invariants of an equivariant sheaf over every open subset, which gives a functor $(\_)^G\colon \Coh_G(Y)\to \Coh(Y)$. Since we are working over $\IC$, this functor is exact and we get an induced functor $(\_)^G\colon \D_G(Y)\to \D(Y)$.

Let now $X^n$ be the $n$-fold cartesian product of a smooth quasi-projective surface $X$, and let $G=\sym_n$ be the symmetric group acting by permutation of the factors. We consider the reduced fibre product
\begin{align*}
\xymatrix{
 (X^{[n]}\times_{X^{(n)}} X^n)_{\mathsf{red}} \ar^{\quad\quad p}[r]  \ar^{q}[d] & X^n\ar^{\pi}[d]   \\
 X^{[n]} \ar^{\mu}[r] & X^{(n)}\,. 
}
\end{align*}
In this set-up, the derived McKay correspondence of Bridgeland--King--Reid \cite{BKR} and Haiman \cite{Hai} states that the functor $\Phi:=p_*\circ q^*\colon \D(X^{[n]})\to \D_{\sym_n}(X^n)$ is an equivalence. One can deduce quite easily that 
\[\Psi:=(\_)^{\sym_n}\circ q_*\circ p^*\colon \D_{\sym_n}(X^n)\to \D(X^{[n]})\]
is an equivalence too, but it is not the inverse of $\Phi$; see \cite[Prop.\ 2.8]{Krug--remarksMcKay}.

For use in the next subsection, we define a functor
\[
\CC\colon \Coh(X)\to \Coh_{\sym_n}(X^n)\quad, \quad E\mapsto \CC(E)=(\bigoplus_{i=1}^n\pr_i^*E, \lambda) 
\]
where $\pr_i\colon X^n\to X$ is the projection to the $i$-th factor, and $\lambda_g$ is, for $g\in \sym_n$, the direct sum of the canonical isomorphisms $\pr_i^*E\xrightarrow\sim g^*\pr_{g(i)}^*E$. As the projections $\pr_i$ are flat, the functor $\CC\colon \Coh(X)\to \Coh_{\sym_n}(X^n)$ is exact, hence it induces a functor
$\CC\colon \D(X)\to \D_{\sym_n}(X^n)$. 
\subsection{Reconstruction of complexes using the McKay correspondence}

In this subsection, with the exception of \autoref{rem:reflinverse}, let $X$ be a smooth quasi-projective surface.
In this case, we can strengthen \autoref{thm:reflexive} by constructing a left-inverse to the functor $(\_)^{[n]}\colon \D(X)\to\D(X^{[n]})$ on the level of derived categories. The key is the following result, which can be seen as a refinement of \autoref{lem:openpullback}.
\begin{theorem}[{\cite[Thm.\ 3.6]{Krug--remarksMcKay}}]\label{thm:PsiC}
$\Psi^{-1}\circ (\_)^{[n]}\cong \CC$. 
\end{theorem}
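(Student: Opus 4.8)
Since $\Psi$ is an equivalence, the claim is equivalent to the isomorphism of functors $(\_)^{[n]}\cong \Psi\circ \CC$, and this is what I would prove. Write $M:=(X^{[n]}\times_{X^{(n)}} X^n)_{\mathsf{red}}$ with its two projections $p\colon M\to X^n$ and $q\colon M\to X^{[n]}$. Unwinding the definitions gives
\[
\Psi(\CC(E))=\Bigl(q_*\,p^*\bigoplus_{i=1}^n\pr_i^*E\Bigr)^{\sym_n}=\Bigl(q_*\bigoplus_{i=1}^n(\pr_i\circ p)^*E\Bigr)^{\sym_n},
\]
so the whole task is to identify this expression with $E^{[n]}=\pr^{\Xi}_{X^{[n]}*}\pr^{\Xi*}_XE$.

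The first step is a purely representation-theoretic reduction. The summands $(\pr_i\circ p)^*E$ are permuted transitively by $\sym_n$, with the stabiliser of the $n$-th summand equal to $\Stab(n)=\sym_{n-1}$, so $\bigoplus_{i=1}^n(\pr_i\circ p)^*E\cong \Ind_{\sym_{n-1}}^{\sym_n}(\pr_n\circ p)^*E$ as $\sym_n$-equivariant sheaves. Because $q$ is $\sym_n$-invariant, pushing forward and then taking invariants turns the induction into a restriction of the acting group (Frobenius reciprocity, since $(\_)^G$ is corepresented by the trivial object), yielding
\[
\Psi(\CC(E))\cong \bigl(q_*(\pr_n\circ p)^*E\bigr)^{\sym_{n-1}}.
\]

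Next I would introduce the geometry linking $M$ to the universal family. The morphism $r:=(q,\pr_n\circ p)\colon M\to X^{[n]}\times X$ factors through the closed subscheme $\Xi$, since $M$ is reduced and irreducible and its generic point lands in $\Xi$; it is $\sym_{n-1}$-invariant and satisfies $\pr^{\Xi}_X\circ r=\pr_n\circ p$ and $\pr^{\Xi}_{X^{[n]}}\circ r=q$. As both $M$ and $\Xi$ are finite over $X^{[n]}$, the map $r$ is finite, so $r_*$ has no higher direct images, and $(\pr_n\circ p)^*E\cong r^*\pr^{\Xi*}_XE$ carries the canonical $\sym_{n-1}$-linearisation of an invariant pullback. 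The projection formula together with the exactness of $(\_)^{\sym_{n-1}}$ then gives
\[
\Psi(\CC(E))\cong \pr^{\Xi}_{X^{[n]}*}\Bigl(\pr^{\Xi*}_XE\lotimes (r_*\reg_M)^{\sym_{n-1}}\Bigr),
\]
which reduces everything to the single geometric identity $(r_*\reg_M)^{\sym_{n-1}}\cong\reg_\Xi$.

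This last identity is the heart of the matter and the step I expect to be the main obstacle. Over the open locus $X^{[n]}_0$ of reduced subschemes it is immediate and recovers \autoref{lem:openpullback}, since there $r$ is a $\sym_{n-1}$-torsor. To extend it over the punctual and non-reduced loci I would exploit that for a \emph{surface} $X$ the universal family $\Xi=\Xi_n$ is smooth (it is isomorphic to the nested Hilbert scheme $X^{[n-1,n]}$, which is smooth for surfaces), hence normal; the induced finite morphism $M/\sym_{n-1}\to\Xi$ is then birational, so Zariski's main theorem forces it to be an isomorphism and delivers $(r_*\reg_M)^{\sym_{n-1}}\cong\reg_\Xi$. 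The crucial place where the surface hypothesis enters is exactly this normality of $\Xi_n$, which fails in higher dimension and is precisely the reason this method is confined to surfaces.
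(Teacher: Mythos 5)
You should first note that the paper contains no proof of \autoref{thm:PsiC} at all: it imports the statement from the cited paper of Krug (Thm.\ 3.6 there), so your argument can only be compared with the literature. Your reduction is sound and is essentially the standard route: the Frobenius-reciprocity step turning $\sym_n$-invariants of the induced sheaf into $\sym_{n-1}$-invariants, the factorisation of $r=(q,\pr_n\circ p)$ through $\Xi$ (legitimate since $M$ is reduced and its set-theoretic image lies in $\Xi$), finiteness of $r$ deduced from finiteness of $q$, and the projection formula all hold and are natural in $E$, so the theorem does reduce to the single identity $(r_*\reg_M)^{\sym_{n-1}}\cong\reg_\Xi$. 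This identity is true, and it is exactly the input on which the arguments of Scala and Krug rest.

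The genuine gap is in the step you yourself call the heart of the matter. It is false for $n\ge 3$ that $\Xi_n\cong X^{[n-1,n]}$, and false that $\Xi_n$ is smooth. The natural morphism $X^{[n-1,n]}\to\Xi_n$, $(\xi\subset\xi')\mapsto(\xi',x)$ with $x$ the residual point, has fibre over $(\xi',x)$ the projectivisation of the socle of $\reg_{\xi',x}$: for $n=3$ and $\xi'=\Spec(\reg_{X,x}/\fm_x^{2})$ the socle is the two-dimensional space $\fm_x/\fm_x^{2}$, so the fibre is a $\IP^1$, and the nested Hilbert scheme (smooth, by Cheah) is a resolution of $\Xi_3$, not isomorphic to it. Indeed $\Xi_3$ is singular at that point: a first-order deformation of the pair $(\xi',x)$ is a pair $(\phi,v)\in\Hom(\fm_x^{2},\reg_{X,x}/\fm_x^{2})\times T_xX$ subject to a compatibility that is vacuous here, because every module homomorphism $\phi\colon\fm_x^{2}\to\reg_{X,x}/\fm_x^{2}$ automatically has image in $\fm_x/\fm_x^{2}$; hence $\dim T_{(\xi',x)}\Xi_3=6+2=8>6=\dim\Xi_3$. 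What your Zariski's-main-theorem argument actually needs is only that $\Xi_n$ is normal, and that is true, but it is a nontrivial theorem of Fogarty (the universal family over the Hilbert scheme of points of a smooth surface is irreducible, Cohen--Macaulay and normal) rather than a consequence of smoothness. Replace your smoothness claim by that citation and your proof closes; as written, the crucial step rests on a false assertion.
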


\begin{theorem}\label{thm:Psiinverse}
The functor $G=(\_)^{\sym_n}\circ \delta^*\circ \Psi^{-1}\colon \D(X^{[n]})\to \D(X)$ is left inverse to $(\_)^{[n]}$. 
\end{theorem}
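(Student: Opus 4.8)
The plan is to reduce the statement to a short computation via the key identity of \autoref{thm:PsiC}. Precomposing $G$ with $(\_)^{[n]}$ and inserting $\Psi^{-1}\circ(\_)^{[n]}\cong\CC$ gives
\[
G\circ (\_)^{[n]} = (\_)^{\sym_n}\circ \delta^*\circ \Psi^{-1}\circ (\_)^{[n]}\cong (\_)^{\sym_n}\circ \delta^*\circ \CC\,,
\]
so everything comes down to proving $(\_)^{\sym_n}\circ \delta^*\circ \CC\cong \id_{\D(X)}$. Thus the whole content of the theorem is already packaged into the equivalence $\Psi$ and \autoref{thm:PsiC}, and what remains is to understand the elementary functor $(\_)^{\sym_n}\circ \delta^*\circ \CC$ directly.

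First I would compute $\delta^*\circ \CC$. Since $\delta\colon X\hookrightarrow X^n$ is the small diagonal, it is $\sym_n$-equivariant for the \emph{trivial} action on the target $X$ (every permutation fixes $\delta(x)=(x,\dots,x)$), so $\delta^*$ indeed lands in $\D_{\sym_n}(X)$ with trivial underlying action, where the invariants functor is defined. Because $\pr_i\circ \delta=\id_X$ for every $i$ — exactly as used in the proof of \autoref{thm:reflexive} — the underlying object of $\delta^*\CC(E)=\delta^*\bigl(\bigoplus_{i=1}^n\pr_i^*E\bigr)$ is $E^{\oplus n}$. The point requiring care is the induced linearisation: the isomorphisms $\lambda_g$ defining $\CC(E)$ permute the summands $\pr_i^*E$ according to $g$, and pulling back along the equivariant map $\delta$ turns this into the permutation action of $\sym_n$ on the $n$ summands of $E^{\oplus n}$. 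In other words, $\delta^*\CC(E)\cong E\otimes_{\IC}V$, where $V\cong \IC^n$ denotes the natural permutation representation of $\sym_n$ and $\sym_n$ acts trivially on $X$, naturally in $E$.

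Finally I would take invariants. Since we work over $\IC$, the invariants functor is exact and commutes with tensoring by the trivially-acted object $E$, so $(\delta^*\CC(E))^{\sym_n}\cong E\otimes_\IC V^{\sym_n}$. The invariants of the permutation representation $V$ are one-dimensional, spanned by $(1,\dots,1)$, whence $E\otimes_\IC V^{\sym_n}\cong E$, again naturally in $E$. Chaining these isomorphisms yields $(\_)^{\sym_n}\circ \delta^*\circ \CC\cong \id_{\D(X)}$, and hence $G\circ(\_)^{[n]}\cong \id_{\D(X)}$, which is the claim.

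The main obstacle is the bookkeeping of the equivariant structure in the middle step: one must verify that the linearisation on $\delta^*\CC(E)$ really is the permutation representation (rather than some sign twist or other modification), since it is precisely the fact that its invariants form the single diagonal copy of $E$ — not $0$ and not $E^{\oplus n}$ — that makes the composition the identity. Everything else is formal, relying only on the equivariance of $\delta$, the identity $\pr_i\circ\delta=\id_X$, and the exactness of invariants over $\IC$.
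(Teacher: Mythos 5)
Your proposal is correct and follows essentially the same route as the paper: reduce via \autoref{thm:PsiC} to showing $(\_)^{\sym_n}\circ\delta^*\circ\CC\cong\id$, identify $\delta^*\CC(E)$ as $E^{\oplus n}$ with the permutation linearisation using $\pr_i\circ\delta=\id_X$, and conclude by taking invariants. The only cosmetic difference is that the paper handles the derived pullback $\delta^*$ by explicitly resolving $E$ by vector bundles and checking on local sections, whereas you phrase the same computation as $\delta^*\CC(E)\cong E\otimes_{\IC}V$ with $V$ the permutation representation and use $V^{\sym_n}\cong\IC$.
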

\begin{proof}
By \autoref{thm:PsiC}, we have $G\circ (\_)^{[n]}\cong (\_)^{\sym_n}\circ \delta^*\circ\CC$. The only factor of the composition $(\_)^{\sym_n}\circ \delta^*\circ\CC$ that is not already exact on the level of (equivariant) coherent sheaves is $\delta^*$. Hence, for a given $E\in \D(X)$, we can compute $(\delta^*\CC(E))^{\sym_n}$ by replacing $E$ by a resolution by vector bundles and then applying the non-derived functors. Accordingly, it suffices to prove the isomorphism of functors $(\_)^{\sym_n}\circ \delta^*\circ\CC\cong \id$ on the category $\VB(X)$ of vector bundles on $X$. For every $E\in \VB(X)$, since $\pr_i\circ \delta=\id_X$, we have $\delta^*\CC(E)\cong E^{\oplus n}$ with the $\sym_n$-action on a local section $s=(s_1,\dots s_n)\in E^{\oplus n}(U)$ given by $g\cdot s =(s_{g^{-1}(1)}, \dots, s_{g^{-1}(n)})$. Hence, $s$ is $\sym_n$-invariant if and only if it is of the form $s=(t,\dots, t)$ for some $t\in E(U)$. It follows that the projection $E^{\oplus n}\to E$ to any factor induces a functorial isomorphism $(\delta^*\CC(E))^{\sym_n}\cong E$.   
\end{proof}
\begin{remark}\label{rem:reflinverse}
Using $\sym_n$-equivariant sheaves, we can also slightly strengthen \autoref{thm:reflexive} by constructing, for $X$ of arbitrary dimension, a concrete left-inverse functor of the restriction $(\_)^{[n]}\colon \Refl(X)\to \Coh(X^{[n]})$ of the tautological functor to the category of reflexive sheaves. First, we note that the pull-back of a coherent sheaf along the quotient $\pi_0\colon X_0^n\to X_0^{(n)}$ is naturally equipped with a $\sym_n$-linearisation given by the canonical isomorphisms $\pi_0^*\xrightarrow\sim g^*\circ \pi_0^*$. Hence, we can regard $\pi_0^*$ as a functor $\Coh(X_0^{(n)})\to \Coh_{\sym_n}(X_0^n)$ instead of a functor $\Coh(X_0^{(n)})\to \Coh(X_0^n)$. Doing this, we can see that, instead of the statement of \autoref{lem:openpullback}, we get the isomorphism of functors $\pi_0^*\circ j^*(\_)^{[n]}\cong \alpha^*\circ\CC\colon \D(X)\to \D_{\sym_n}(X^n_0)$. Now, setting \[H:=(\_)^{\sym_n}\circ \delta^*\circ \alpha_*\circ \pi_0^*\circ i^*\colon \Coh(X^{[n]})\to \Coh(X)\,,\] and combining the proofs of \autoref{thm:reflexive} and \autoref{thm:Psiinverse}, we get $H\circ (\_)^{[n]}\cong \id$ as an isomorphism of endofunctors of $\Refl(X)$.     
\end{remark}
Recall that, as explained in \autoref{subsect:tautological}, we have for $E\in \D(X)$ a natural exact triangle
\begin{align}\label{eq:FFF}
E^{[n]}[-1] \to \FF_n(E)\to \reg_{X^{[n]}}\otimes \Ho^*(E)\to E^{[n]}\,.  
\end{align}
Hence, if $G(\reg_{X^{[n]}})=0$, we would have an isomorphism of functors $G\circ (\_)^{[n]}[-1]\cong G\circ \FF_n$, and accordingly $\FF_n$ also had a left-inverse, namely $G[1]$. However, this is not the case. One can show easily that 
\begin{align}\label{eq:PsiO}
 \Psi^{-1}(\reg_{X^{[n]}})\cong \reg_{X^n} 
\end{align}
where $\reg_{X^n}$ is equipped with the canonical $\sym_n$-linearisation given by the push-forwards of functions $\reg_{X^n}\xrightarrow\sim g^*\reg_{X^n}$; see \cite[Rem.\ 3.10]{Krug--remarksMcKay}. It follows that $G(\reg_{X^{[n]}})\cong \reg_X$.

In the following, we will adapt the functor $G\colon \D(X^{[n]})\to \D(X)$ in such a way that it annihilates $\reg_{X^{[n]}}$ while still being left-inverse to $(\_)^{[n]}$. This new functor will then (up to shift) also be a left inverse of $\FF_n$.

Let $\sym_k$ act on some variety $Y$. We denote by $\MM_{\alt_k}\colon \Coh_{\sym_k}(Y)\to \Coh_{\sym_k}(Y)$ the tensor product with the sign character, which means that   
$\MM_{\alt_k}(E, \lambda)=(E,\bar\lambda)$ with $\bar\lambda_g=\sgn(g)\cdot \lambda_g$.
This functor is exact, hence induces a functor $\MM_{\alt_k}\colon \D_{\sym_k}(Y)\to \D_{\sym_k}(Y)$.
\begin{theorem}\label{thm:Psiinversecomplicated}
The functor $I=(\_)^{\sym_2}\circ \MM_{\alt_2}\circ \Res_{\sym_n}^{\sym_2}\circ \delta^*\circ \Psi^{-1}\colon \D(X^{[n]})\to \D(X)$ is left-inverse to $(\_)^{[n]}$, and $I[1]$ is left inverse to $\FF_n$.
\end{theorem}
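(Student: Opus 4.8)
The plan is to establish two properties of $I$ separately: first that $I\circ (\_)^{[n]}\cong \id$, and second that $I(\reg_{X^{[n]}})\cong 0$. The statement about $\FF_n$ then follows formally from the triangle \eqref{eq:FFF}. Throughout, $\sym_2\le \sym_n$ denotes the subgroup generated by the transposition $(1\,2)$, and the mechanism driving everything is that the composite $(\_)^{\sym_2}\circ \MM_{\alt_2}$, i.e.\ tensoring by the sign character before taking $\sym_2$-invariants, extracts the \emph{antisymmetric} (sign-isotypic) part of a $\sym_2$-representation.

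For the first property I would argue exactly as in the proof of \autoref{thm:Psiinverse}. Among the factors of $I$, only $\delta^*$ fails to be exact on the level of (equivariant) coherent sheaves, so it suffices to check the isomorphism of functors on $\VB(X)$. Using \autoref{thm:PsiC} to replace $\Psi^{-1}\circ(\_)^{[n]}$ by $\CC$, for a vector bundle $E$ one has $\delta^*\CC(E)\cong E^{\oplus n}$, where the transposition $\tau=(1\,2)$ acts by exchanging the first two summands and fixing the rest. After twisting by $\MM_{\alt_2}$ the operator $\tau$ acts as $(s_1,s_2,s_3,\dots,s_n)\mapsto(-s_2,-s_1,-s_3,\dots,-s_n)$, whose invariant sections are precisely those of the form $(s,-s,0,\dots,0)$. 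Projection to the first summand then gives a functorial isomorphism $\bigl(\MM_{\alt_2}\Res_{\sym_n}^{\sym_2}\delta^*\CC(E)\bigr)^{\sym_2}\cong E$, so $I\circ(\_)^{[n]}\cong \id$.

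For the second property I would start from \eqref{eq:PsiO}, which identifies $\Psi^{-1}(\reg_{X^{[n]}})$ with $\reg_{X^n}$ carrying its canonical linearisation. Since the small diagonal $\delta(X)\subset X^n$ is fixed pointwise by $\sym_n$, one has $g\circ\delta=\delta$ for every $g$, so $\delta$ is $\sym_n$-equivariant for the trivial action on $X$ and the canonical linearisation restricts to the \emph{trivial} linearisation on $\delta^*\reg_{X^n}\cong\reg_X$. Twisting by $\MM_{\alt_2}$ therefore makes $\tau$ act on $\reg_X$ by $-\id$, and because we work over $\IC$ the $\sym_2$-invariants vanish. Thus $I(\reg_{X^{[n]}})\cong 0$, and likewise $I\bigl(\reg_{X^{[n]}}\otimes \Ho^*(E)\bigr)\cong 0$ for every $E$, since $I$ is $\IC$-linear, triangulated, and commutes with the finite direct sum of shifts making up $\reg_{X^{[n]}}\otimes\Ho^*(E)$.

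Finally, applying $I$ to \eqref{eq:FFF} and using $I\bigl(\reg_{X^{[n]}}\otimes\Ho^*(E)\bigr)\cong 0$ collapses the triangle to an isomorphism $I(\FF_n(E))\cong I(E^{[n]})[-1]\cong E[-1]$, whence $I[1]\circ\FF_n\cong\id$. The step deserving the most care is the identification of the induced linearisation on $\delta^*\reg_{X^n}$ as trivial: this is exactly what distinguishes the behaviour of $I$ on $\reg_{X^{[n]}}$ (annihilated by the sign twist) from its behaviour on a tautological object $E^{[n]}$ (where the swapped permutation action survives the twist as a single copy of $E$), and it is this asymmetry that the whole construction exploits.
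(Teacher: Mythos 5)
Your proposal is correct and takes essentially the same approach as the paper: reduce to $\VB(X)$ via \autoref{thm:PsiC} and exactness of the other factors, identify the sign-twisted $\sym_2$-invariants of $\delta^*\CC(E)\cong E^{\oplus n}$ as a single copy of $E$, then use \eqref{eq:PsiO} to get $I(\reg_{X^{[n]}})\cong 0$ and collapse the triangle \eqref{eq:FFF}. The only (harmless) differences are that you spell out more explicitly why the canonical linearisation of $\reg_{X^n}$ restricts trivially along $\delta$ and why $I$ annihilates $\reg_{X^{[n]}}\otimes\Ho^*(E)$, steps the paper leaves implicit.
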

\begin{proof}
By \autoref{thm:PsiC}, we have $I\circ (\_)^{[n]}\cong (\_)^{\sym_2}\circ \MM_{\alt_2}\circ \Res_{\sym_n}^{\sym_2}\circ \delta^*\circ\CC$. Hence, exactly as in the proof of \autoref{thm:Psiinverse}, we can reduce the assertion $I\circ (\_)^{[n]}\cong \id_{\D(X)}$ to the construction of an isomorphism $(\_)^{\sym_2}\circ \MM_{\alt_2}\circ \Res_{\sym_n}^{\sym_2}\circ \delta^*\circ\CC\cong \id$ of endofunctors of $\VB(X)$. 

For $E\in \VB(X)$, we have $\MM_{\alt_2}\Res_{\sym_n}^{\sym_2}\delta^*\CC(E)\cong E^{\oplus n}$ with the $\sym_2$-action on a local section $s=(s_1,\dots s_n)\in E^{\oplus n}(U)$ given by $(1\,\,2)\cdot s =(-s_2,-s_1,-s_3, \dots, -s_{n})$. Hence, $s$ is $\sym_2$-invariant if and only if it is of the form $s=(t,-t, 0,\dots, 0)$ for some $t\in E(U)$. It follows that the projection $E^{\oplus n}\to E$ to any of the first two factors induces a functorial isomorphism $\bigr(\MM_{\alt_2}\Res_{\sym_n}^{\sym_2}\delta^*\CC(E)\bigl)^{\sym_2}\cong E$.

Following the discussion above, for the second assertion it suffices to show $I(\reg_{X^{[n]}})\cong 0$.
By \eqref{eq:PsiO}, we only need to check that $\bigl(\MM_{\alt_2}\Res_{\sym_n}^{\sym_2}\delta^*(\reg_{X^{n}})\bigr)^{\sym_2}=0$. This is the case since the $\sym_2$-action on $\MM_{\alt_2}\Res_{\sym_n}^{\sym_2}\delta^*(\reg_{X^{n}})\cong \reg_X$ is given by multiplication by $\sgn$. 
\end{proof}

\section{Reconstruction using a small stratum of punctual subschemes}\label{sect:smallstratum}

\subsection{Jet bundles}

Let $\Delta\subset X\times X$ be the diagonal and $\cI_\Delta$ its ideal sheaf. For $m\in \IN$, we write the subscheme defined by $\cI_\Delta^m$ as $m\Delta\subset X\times X$. For $E\in \D(X)$ and $m\in \IN_0$, the associated \textit{$m$-jet object} is defined as $\Jet^m E=\FM_{\reg_{(m+1)\Delta}}(E)$. In particular, we have $\Jet^0 E\cong E$. For $m>0$, there is a short exact sequence
\begin{align}\label{eq:sesdiag}
 0\to \delta_*(\Sym^{m}\Omega_X)\to \reg_{(m+1)\Delta}\to \reg_{m\Delta}\to 0\,.
\end{align}
which induces the exact triangle
\begin{align}\label{eq:jettriangle}
 \Sym^{m}\Omega_X\otimes E\to \Jet^{m}E\to \Jet^{m-1}E\to \Sym^{m}\Omega_X\otimes E[1]\,. 
\end{align}
It follows inductively that, if $E\in \VB(X)$ is a vector bundle of rank $r$, the associated $m$-jet object $\Jet^{m} E$ is a vector bundle of rank $\binom{m+d}d r$ where $d=\dim X$. Note that $\Jet^m\reg_X$ has fibres $(\Jet^m\reg_X)(x)=\reg_{X,x}/\fm_x^{m+1}$.

\subsection{The locus of punctual subschemes with Hilbert function concentrated in minimal degrees}\label{subsec:family}

For $0\le \ell\le {\binom{m+d-1}{d-1}}=\rank(\Sym^{m}\Omega_X)$, there is a family of punctual length $n:=\binom{m+d-1}d + \ell$ subschemes of $X$ over $\IG:=\Gr(\Sym^{m}\Omega_X, \ell)$, the Grassmannian of rank $\ell$ quotients of the symmetric product of the cotangent bundle, constructed as follows. Let $f\colon\IG\to X$ be the natural projection, and let $\eps\colon f^*\Sym^{m}\Omega_X\to \cQ$ be the universal quotient bundle.
Using base change along the cartesian diagram
\begin{align*}
\xymatrix{
 \IG \ar@^{(->}^{(\id_{\IG},f)\quad}[r]  \ar^{f}[d] & \IG\times X\ar^{f\times \id_X}[d]   \\
 X \ar@^{(->}^{\delta\quad}[r] & X\times X\,, 
}
\end{align*}
we see that the pull-back of the short exact sequence \eqref{eq:sesdiag} along $f\times \id_X$ is of the form 
\begin{align}\label{eq:sesgraphf}
 0\to (\id_\IG, f)_* f^*\Sym^{m}\Omega_X\to (f\times \id_X)^*\reg_{(m+1)\Delta}\to (f\times \id_X)^*\reg_{m\Delta}\to 0\,.
\end{align}
Modding out the first two terms of \eqref{eq:sesgraphf} by the kernel of the push-forward
\[
  (\id, f)_*\eps\colon (\id_\IG, f)_* f^*\Sym^{m}\Omega_X\to (\id_\IG,f)_*\cQ
\]
of the universal quotient, we get a short exact sequence 
\begin{align}\label{eq:cF}
0\to (\id_\IG, f)_*\cQ \to \cF\to (f\times \id_X)^*\reg_{m\Delta}\to 0
\end{align}
where $\cF$ is a quotient of $(f\times \id_X)^*\reg_{(m+1)\Delta}$ hence, in particular, of       
$(f\times \id_X)^*\reg_{X\times X}\cong \reg_{\IG\times X}$. Thus, $\cF\cong \reg_\cZ$ for some closed subscheme $\cZ\subset \IG\times X$ supported on $\Gamma_f=(f\times\pr_X)^{-1}\Delta$. Note that $\pr_{\IG*}(\id_\IG, f)_*\cQ\cong \cQ$, and by flat base change 
$\pr_{\IG*}(f\times \id_X)^*\reg_{m\Delta}\cong f^*(\Jet^{m-1}\reg_X)$. Hence, by \eqref{eq:cF}, $\pr_{\IG*}\reg_\cZ$ is locally free of rank 
\[
\rank(\pr_{\IG*}\reg_\cZ)=\rank \cQ + \rank (\Jet^{m-1}\reg_X)= \ell + \binom{m+d-1}d =n  \,,
\]
which means that $\cZ$ is a family of length $n$ subschemes of $X$, flat over $\IG$.
We denote the classifying morphism for $\cZ\subset \IG\times X$ by $\iota\colon \IG\to X^{[n]}$. 

Let us note some facts about $\iota\colon \IG\to X$, though they are not logically necessary for the proofs in the following subsection. The classifying morphism $\iota\colon \IG\to X^{[n]}$ is a closed embedding; compare \cite[Sect.\ 2.1]{Goettschebook}. Its image is exactly the locus of length $n$ subschemes $\xi\subset X$ with $\supp\xi =\{x\}$ for some point $x\in X$, which means that they are \textit{punctual}, satisfying $\Spec(\reg_{X,x}/\fm_x^m)\subset \xi\subset \Spec(\reg_{X,x}/\fm_x^{m+1})$. The last property is equivalent to the Hilbert function of $\xi$ being $(1,d, \binom{d+1}{d-1},\dots, \binom{d+m-2}{d-1},\ell, 0,0,\dots)$. For more information on the strata of $X^{[n]}$ parametrising punctual subschemes with given Hilbert functions, see \cite[Sect.\ 2.1]{Goettschebook} and \cite{Iarrobino--memoirs}.       

\subsection{A left-inverse functor using restriction to the Grassmannian bundle}

In this subsection, we construct a left-inverse of $(\_)^{[n]}$ and $\FF_n$ whenever $n$ is not of the form 
$n=\binom{u+d}d$ for any $u\in \IN_0$. Note that, for $d=1$, every positive integer is of the form $\binom{u+d}d$. This means that we do not get left-inverses in the case that $X$ is a curve by our method; see 
\autoref{rem:jetreconstruction} for some further details on the curve case.

\begin{theorem}\label{thm:Gres}
Assume that $n\in \IN$ is not of the form $n=\binom{u+d}k$ for any $u\in \IN_0$. 
Set $m-1:=\max\{u\mid \binom{u+d}d<n\}$ and $\ell:=n-\binom{m+d-1}d$. Let $f\colon \IG=\Gr(\Sym^{m}\Omega_X,\ell)\to X$ be the Grassmannian bundle together with the family $\cZ\subset \IG\times X$ of length $n$ subschemes constructed in \autoref{subsec:family}, and let $\iota\colon \IG\hookrightarrow X^{[n]}$ be the classifying morphism for $\cZ$. Then \[K:=f_*\bigl(\cQ^\vee\otimes\iota^*(\_)\bigr)\colon \D(X^{[n]})\to \D(X)\,,\] where $f^*\Sym^{m}\Omega_X\to \cQ$ is the universal quotient bundle, is left-inverse to the functor $(\_)^{[n]}$. Also, 
$K[1]$ is left-inverse to $\FF_n$.
\end{theorem}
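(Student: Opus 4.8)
The plan is to prove the two statements in sequence, reducing everything to two cohomology computations on the Grassmannian bundle $f\colon\IG\to X$.

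First I would compute $K\circ(\_)^{[n]}$. Since $\iota$ is by construction the classifying morphism for the flat family $\cZ\subset\IG\times X$, \autoref{lem:tautbasechange} gives $\iota^*\circ(\_)^{[n]}\cong\FM_{\reg_\cZ}$, and hence
\[
K\circ(\_)^{[n]}\cong f_*\bigl(\cQ^\vee\otimes\FM_{\reg_\cZ}(\_)\bigr)\,.
\]
The idea is now to feed the defining short exact sequence \eqref{eq:cF} of $\reg_\cZ$ into the Fourier--Mukai transform. Applying $\FM_{(\_)}(E)=\pr_{\IG*}\bigl((\_)\otimes\pr_X^*E\bigr)$ to \eqref{eq:cF} produces, for every $E\in\D(X)$, an exact triangle whose outer terms I would identify as follows: the graph term gives $\FM_{(\id_\IG,f)_*\cQ}(E)\cong\cQ\otimes f^*E$ by the projection formula, since $(\id_\IG,f)$ is a section of $\pr_\IG$; while the quotient term gives $\FM_{(f\times\id_X)^*\reg_{m\Delta}}(E)\cong f^*\Jet^{m-1}E$ by the flat base change already recorded in \autoref{subsec:family}. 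This yields the triangle
\[
\cQ\otimes f^*E\to\FM_{\reg_\cZ}(E)\to f^*\Jet^{m-1}E\to\cQ\otimes f^*E[1]\,.
\]

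Next I would tensor this triangle by $\cQ^\vee$ and apply $f_*$. The outer terms become $f_*\bigl((\cQ^\vee\otimes\cQ)\otimes f^*E\bigr)\cong\bigl(f_*(\cQ^\vee\otimes\cQ)\bigr)\otimes E$ and $f_*\bigl(\cQ^\vee\otimes f^*\Jet^{m-1}E\bigr)\cong\bigl(f_*\cQ^\vee\bigr)\otimes\Jet^{m-1}E$ by the projection formula. The crux is then the two relative cohomology computations
\[
f_*(\cQ^\vee\otimes\cQ)\cong\reg_X\quad\text{and}\quad f_*\cQ^\vee\cong 0\,,
\]
which hold because on the Grassmannian fibres of $f$ the bundle $\cQ$ is simple with no higher self-extensions while $\cQ^\vee$ is acyclic (Bott's theorem), so that cohomology and base change globalises the fibrewise statements. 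This is exactly where the hypothesis enters: $n$ not being of the form $\binom{u+d}d$ is equivalent to $0<\ell<\binom{m+d-1}{d-1}=\rank(\Sym^m\Omega_X)$, i.e.\ to both $\cQ\neq0$ and $\cS\neq0$, which is precisely what makes the two displayed identities true. Granting them, the first term of the pushed-forward triangle is $E$ and the third vanishes, so $K(E^{[n]})\cong E$ functorially, proving $K\circ(\_)^{[n]}\cong\id_{\D(X)}$.

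Finally, for $\FF_n$ (where $X$ is projective), I would apply $K$ to the triangle arising from $0\to\cI_\Xi\to\reg_{X^{[n]}\times X}\to\reg_\Xi\to0$, namely
\[
\FF_n(E)\to\reg_{X^{[n]}}\otimes\Ho^*(E)\to E^{[n]}\to\FF_n(E)[1]\,,
\]
using $\FM_{\reg_{X^{[n]}\times X}}(E)\cong\reg_{X^{[n]}}\otimes\Ho^*(E)$. Here $K(\reg_{X^{[n]}})=f_*\bigl(\cQ^\vee\otimes\iota^*\reg_{X^{[n]}}\bigr)=f_*\cQ^\vee\cong0$ by the same computation, so the middle term dies; combined with $K(E^{[n]})\cong E$ from the first part, the triangle degenerates to $K\FF_n(E)\to0\to E\to K\FF_n(E)[1]$, giving $K\FF_n(E)\cong E[-1]$ and hence $K[1]\circ\FF_n\cong\id_{\D(X)}$. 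I expect the main obstacle to be the two Grassmannian-bundle cohomology computations and the verification that all the resulting isomorphisms are natural in $E$; everything else is a formal manipulation of base change and the projection formula.
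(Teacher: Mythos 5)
Your proof is correct, and its skeleton is the same as the paper's: pull back along $\iota$ via \autoref{lem:tautbasechange}, turn the short exact sequence \eqref{eq:cF} into the exact triangle $\cQ\otimes f^*E\to\iota^*E^{[n]}\to f^*\Jet^{m-1}E\to\cQ\otimes f^*E[1]$, apply $f_*(\cQ^\vee\otimes\_)$, and reduce the $\FF_n$ statement to $K(\reg_{X^{[n]}})\cong 0$ via \eqref{eq:FFF}. Where you genuinely diverge is in the two key Grassmannian-bundle computations. The paper packages them as \autoref{lem:Grasssod} and proves them categorically: by Samokhin's relative version of Kapranov's exceptional collection, $f^*\D(X)$ and $\cQ\otimes f^*\D(X)$ are semi-orthogonal factors of $\D(\IG)$, so full faithfulness gives $f_*\bigl(\sHom(\cQ,\cQ)\otimes f^*(\_)\bigr)\cong\id$ and semi-orthogonality gives $f_*\bigl(\cQ^\vee\otimes f^*(\_)\bigr)\cong 0$. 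You instead use the derived projection formula to reduce these functorial statements to the two sheaf-level identities $f_*(\cQ^\vee\otimes\cQ)\cong\reg_X$ and $f_*\cQ^\vee\cong 0$, proved fibrewise by Borel--Weil--Bott together with cohomology and base change. This is more elementary, since it avoids the relative semi-orthogonal decomposition entirely, and naturality in $E$ comes for free because the projection formula and base change isomorphisms are natural; the paper's route buys the computations with no cohomology calculation at all, at the price of citing the stronger structural input. You also correctly located where the hypothesis on $n$ enters: $0<\ell<\rank(\Sym^m\Omega_X)$ is exactly what makes the fibres of $f$ positive-dimensional Grassmannians, so that both identities hold.

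One small gap in your version should be flagged: cohomology and base change plus the fibrewise computation $\Ext^*(\cQ|_{\IG_x},\cQ|_{\IG_x})\cong\IC$ only show that $f_*(\cQ^\vee\otimes\cQ)$ is a line bundle concentrated in degree $0$; they do not by themselves identify it with $\reg_X$, and an arbitrary identification would not obviously be compatible with the maps in your triangle. The fix is to use the canonical map: the unit $\reg_\IG\to\sHom(\cQ,\cQ)$ pushes forward to $\reg_X\cong f_*\reg_\IG\to f_*\sHom(\cQ,\cQ)$, which is fibrewise nonzero (it hits the identity endomorphism, which spans $\End(\cQ|_{\IG_x})$) and is therefore an isomorphism of line bundles. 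With that one-line addition your argument is complete.
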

For the proof, we need the following 
\begin{lemma}\label{lem:Grasssod}
Let $Y$ be a smooth variety, $V\in \VB(Y)$, and $0<\ell<\rank V$. Let $f\colon \Gr(V,\ell)\to Y$ be the Grassmannian bundle of rank $\ell$ quotients of $V$ with universal quotient bundle $\cQ$.
Then we have the following isomorphisms of functors:
\begin{enumerate}
 \item $f_*\bigl(\cQ^\vee\otimes f^*(\_)\bigr)\cong0$,
 \item $f_*\bigl(\sHom(\cQ,\cQ)\otimes f^*(\_)\bigr)\cong\id_{\D(Y)}$,
 \item $f_*\circ f^*\cong \id_{\D(Y)}$. 
\end{enumerate}
\end{lemma}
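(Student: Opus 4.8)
The plan is to reduce all three statements to three computations of the derived pushforward $f_*$ of a single locally free sheaf, via the projection formula, and then to carry out those computations fibrewise using the classical cohomology of universal bundles on a Grassmannian together with cohomology and base change.

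First I would invoke the (derived) projection formula. Since $\cQ^\vee$, $\sHom(\cQ,\cQ)$ and $\reg$ are locally free and $f$ is proper, for every $E\in\D(Y)$ there are natural isomorphisms
\[
f_*\bigl(\cQ^\vee\otimes f^*E\bigr)\cong (f_*\cQ^\vee)\otimes E,\qquad f_*\bigl(\sHom(\cQ,\cQ)\otimes f^*E\bigr)\cong (f_*\sHom(\cQ,\cQ))\otimes E,\qquad f_*f^*E\cong (f_*\reg)\otimes E.
\]
Thus (i)--(iii) follow at once from the three isomorphisms of objects in $\D(Y)$
\[
f_*\cQ^\vee\cong 0,\qquad f_*\sHom(\cQ,\cQ)\cong\reg_Y,\qquad f_*\reg\cong\reg_Y,
\]
since tensoring with $0$, $\reg_Y$, $\reg_Y$ gives $0$, $\id$, $\id$ respectively.

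Next I would establish these three object-level identities fibrewise. The fibre of $f$ over $y\in Y$ is the Grassmannian $\Gr(V_y,\ell)$ of rank $\ell$ quotients of the fibre $V_y$, and the restrictions of $\cQ$ and $\reg$ are the universal rank $\ell$ quotient bundle $Q$ and the structure sheaf. The required input is the cohomology of universal bundles on a Grassmannian, obtainable from Borel--Weil--Bott (or from Kapranov's exceptional collection): one has $H^*(\Gr(V_y,\ell),\reg)=\C$ concentrated in degree $0$; $H^*(\Gr(V_y,\ell),Q^\vee)=0$; and $H^*(\Gr(V_y,\ell),\sHom(Q,Q))=\C$ concentrated in degree $0$, the last expressing that $Q$ is an exceptional bundle. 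In each case the fibre cohomology sits in a single degree with dimension independent of $y$, so cohomology and base change shows that $f_*\cQ^\vee=0$, while $f_*\reg$ and $f_*\sHom(\cQ,\cQ)$ are line bundles concentrated in degree $0$; the natural maps $\reg_Y\to f_*\reg$ and $\reg_Y\to f_*\sHom(\cQ,\cQ)$ (the latter induced by the inclusion of scalars $\reg\hookrightarrow\sHom(\cQ,\cQ)$) are then isomorphisms, being nonzero maps of line bundles that are isomorphisms on every fibre.

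The only genuinely nontrivial ingredients are the fibrewise cohomology computations, so the main obstacle is essentially bookkeeping: matching the convention that $\Gr(V,\ell)$ parametrises rank $\ell$ quotients with the weights entering Borel--Weil--Bott, and thereby confirming that $\cQ^\vee$ is acyclic and that $\cQ$ is exceptional on each fibre. The passage from the fibrewise statements to the relative ones is routine precisely because in every case the cohomology is concentrated in a single cohomological degree with constant dimension, so no jumping or higher-page spectral sequence phenomenon can occur and base change applies directly.
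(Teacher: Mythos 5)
Your proof is correct, but it takes a genuinely different route from the paper. The paper does not compute any pushforwards: it cites Samokhin's relative version of Kapranov's exceptional collection, giving a semi-orthogonal decomposition of $\D(\Gr(V,\ell))$ with factors $\sfS^\alpha(\cQ)\otimes f^*\D(Y)$; since $f^*\D(Y)$ and $\cQ\otimes f^*\D(Y)$ are two such factors, the functors $f^*$ and $\cQ\otimes f^*(\_)$ are fully faithful, and (ii), (iii) follow because a fully faithful functor composed with its right adjoint ($f_*$, resp.\ $f_*(\cQ^\vee\otimes\_)$) is the identity, while (i) is exactly the semiorthogonality $\Hom\bigl(\cQ\otimes f^*\D(Y),f^*\D(Y)\bigr)=0$ rewritten via adjunction. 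You instead use the projection formula to reduce all three statements to the object-level identities $f_*\cQ^\vee\cong 0$, $f_*\sHom(\cQ,\cQ)\cong\reg_Y$, $f_*\reg_{\Gr(V,\ell)}\cong\reg_Y$, and prove these fibrewise by Borel--Weil--Bott (acyclicity of $Q^\vee$, exceptionality of $Q$, $H^*(\Gr,\reg)=\C$) together with Grauert/cohomology-and-base-change; your criterion that a map of line bundles which is an isomorphism on every fibre is an isomorphism is the right one, and your acyclicity of $\cQ^\vee$ does use $0<\ell<\rank V$ (for $\ell=\rank V$ it fails), consistent with the hypotheses. What each approach buys: yours is more elementary and self-contained, resting only on standard facts (projection formula, classical Grassmannian cohomology, base change) rather than on the nontrivial relative SOD theorem, at the cost of weight-convention bookkeeping in Bott's algorithm; the paper's argument avoids all cohomological computation and base-change hypotheses, treats the three statements uniformly through the formalism of admissible subcategories, and would extend verbatim to other Schur twists $\sfS^\alpha(\cQ)$. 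Both arguments ultimately trace back to the same input, namely Kapranov's collection, used categorically in the paper and cohomologically by you.
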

\begin{proof}
By \cite[Thm.\ 2]{Samokhin--hom}, Kapranov's full exceptional collection on a Grassmannian over a point \cite[Sect.\ 3]{Kapranov--homogeneous} has a relative version in the form of a semi-orthogonal decomposition of $\D(\Gr(V,\ell))$, with all of its factors being of the form $\sfS^\alpha(\cQ) \otimes f^*\D(X)$ for some Schur functors $\sfS^\alpha$; for an overview of the theory of semi-orthogonal decompositions see, for example, \cite{Kuz--ICM}. In particular, two of the factors of the semi-orthogonal decomposition are $f^*\D(X)$ and $\cQ \otimes f^*\D(X)$, which means that the functors $f^*\colon \D(X)\to \D(\Gr(V,\ell))$ and $\cQ\otimes f^*(\_)\colon \D(X)\to \D(\Gr(V,\ell))$ are both fully faithful. Their right adjoints are given by $f_*$ and $f_*(\cQ^\vee\otimes \_)$, respectively. The composition of a fully faithful functor with its right adjoint is the identity, which gives (ii) and (iii). 

For (i), we note that the factor $f^*\D(X)$  stands in our semi-orthogonal decomposition on the left of $\cQ\otimes f^*\D(X)$, which means that $\Hom\bigl(\cQ\otimes f^*\D(X), f^*\D(X)\bigr)=0$; compare \cite[Lem.\ 3.2a)]{Kapranov--homogeneous}. Using the right-adjoint $f_*(\cQ^\vee\otimes\_)$ of the functor $\cQ\otimes f^*(\_)$, this Hom-vanishing translates to (i).   
\end{proof}

\begin{proof}[Proof of \autoref{thm:Gres}]
Let $E\in \D(X)$. By \autoref{lem:tautbasechange}, we have a natural isomorphism $\iota^*E^{[n]}\cong \FM_{\reg_\cZ}(E)$. The short exact sequence \eqref{eq:cF} induces the exact triangle 
\begin{align}\label{eq:sometriangle}
 \FM_{(\id_\IG,f)_*\cQ}(E)\to \FM_{\reg_\cZ}(E)\to \FM_{(f\times \id_X)^*\reg_{m\Delta}}(E)\to \FM_{(\id_\IG,f)_*\cQ}(E)[1]\,. 
\end{align}
By the projection formula, we have $\FM_{(\id_\IG,f)_*\cQ}(E)\cong \cQ\otimes f^*E$. By flat base change, we have $\FM_{(f\times \id_X)^*\reg_{m\Delta}}(E)\cong f^*\FM_{\reg_{m\Delta}}(E)\cong f^*\Jet^{m-1} E$. Hence,
\eqref{eq:sometriangle} can be rewritten as 
\[
\cQ\otimes f^*E\to \iota^*E^{[n]}\to f^*\Jet^{m-1} E\to \cQ\otimes f^*E[1]\,. 
\]
Applying $f_*(\cQ^\vee \otimes \_)$, we get the exact triangle
\[
f_*(\sHom(\cQ,\cQ)\otimes f^*E)\to K(E^{[n]})\to f_*(\cQ^\vee\otimes f^*\Jet^{m-1} E)\to \cQ\otimes f_*(\sHom(\cQ,\cQ)\otimes f^*E)[1]\,. 
\]
By \autoref{lem:Grasssod}, we see that $f_*(\cQ^\vee\otimes f^*\Jet^{m-1} E)$ vanishes, and we get a natural isomorphism $K(E^{[n]})\cong f_*(\sHom(\cQ,\cQ)\otimes f^*E)\cong E$.

Now, in order to prove that $K\circ \FF_n\cong [-1]$, because of \eqref{eq:FFF}, we only need to check that $K(\reg_{X^{[n]}})=0$. As $\iota^*\reg_{X^{[n]}}\cong \reg_\IG\cong f^*\reg_X$, this follows directly from \autoref{lem:Grasssod}(i). 
\end{proof}

\begin{remark}
In the special case $d=2=n$, the functor $K$ of \autoref{thm:Gres} coincides with the functor $I$ of \autoref{thm:Psiinversecomplicated}. To see this, note that, in this special case, the subvariety $\iota(\IG)$ agrees with the whole boundary divisor of $X^{[2]}$, i.e.\ the locus of non-reduced length 2 schemes, and $f\colon \IG\to X$ is a $\IP^1$-bundle, which implies that $\cQ\cong \reg_f(1)$. 
Now, \cite[Prop.\ 4.2]{Kru3} says that $\Phi^{-1}\circ \delta_*\cong \iota_*\bigl(\reg_f(-2)\otimes f^*(\_)\bigr)[1]$. Combining this with \cite[Thm.\ 4.26(i)]{KPScyclic}, we get 
$\Psi\circ \MM_{\alt_2}\circ \delta_*\cong \iota_*\bigl(\reg_f(-1)\otimes f^*(\_)\bigr)[1]$. Taking the left-adjoints on both sides of this isomorphism gives $I\cong K$.
\end{remark}

\begin{remark}\label{rem:jetreconstruction}
 If $n=\binom{m+d}d$, we have $\IG=X$ and $\iota^*E^{[n]}\cong \Jet^m E$. So in this case, our construction only recovers the $m$-jet object $\Jet^m E$, but not $E$ itself. In the case that $X$ is a curve and $E$ is a vector bundle, the $m$-th jet bundle $\Jet^n E$ is recovered in \cite{Biswas-Nagaraj--reconstructioncurves} by a slightly different construction from $E^{[n]}$. The authors of \textit{loc.\ cit.}\ proceed to reconstruct $E$ from $\Jet^n E$ if $g(X)\ge 2$ and $E$ is semi-stable, using the Harder--Narasimhan filtration. Let us remark that, if $X$ is a curve and $L\in \Pic X$ is a line bundle, the exact triangles \eqref{eq:jettriangle} yield the formula
\[
 \det(\Jet^nL)\cong L^{\otimes n+1}\otimes \omega_X^{\otimes \binom{n+1}2}\,.
\]
Hence, if $X=\IP^1$, one can always recover a line bundle $L\in \Pic(\IP^1)$ from its $n$-jet bundle $\Jet^n(L)$ which in turn can be recovered from $L^{[n]}$.
Another way to reconstruct a line bundle $L\in \Pic( \IP^1)$ from $L^{[n]}\in \VB(X^{[n]})$ is to use formulae for the characteristic classes of $L^{[n]}$; see \cite{Mattuck--sym, Wang--tautintegrals}.   
\end{remark}

\subsection{Using a nested Hilbert scheme}\label{sect:nested}
We now turn to the case where $n = \binom{d+m}{m}$ for some $m$.
\begin{lemma}\label{lem:xi}
 Let $\xi\subset X$ be a subscheme concentrated in one point $x\in X$ with $\fm_x^{m+1}\subset \cI_\xi\subset \fm^m_x$ and $\ell(\xi)=\dim_{\IC}(\reg_{X,x}/\fm_x^{m+1})-1=\binom{d+m}d -1$. Let $\cI_{\xi}(x)=\cI_{\xi}/\fm_x$ be the fibre in $x$ of the ideal sheaf of $\xi$. Then $\dim_\IC \cI_{\xi}(x)=1+\binom{d+m}{d-1}-d$.
\end{lemma}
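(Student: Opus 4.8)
The plan is to reduce the statement to local commutative algebra in the regular local ring $R:=\reg_{X,x}$ of dimension $d$, with maximal ideal $\fm:=\fm_x$, and to work with the ideal $I:=\cI_{\xi,x}\subset R$. Since $\xi$ is concentrated at $x$, the fibre in question is $\cI_\xi(x)=I/\fm I$, so what I must compute is $\dim_\IC(I/\fm I)$ (which, by Nakayama, is just the minimal number of generators of $I$). First I would extract the structural consequence of the hypotheses: because $\fm^{m+1}\subseteq I\subseteq \fm^m$, multiplication by $\fm$ carries $I$ into $\fm^{m+1}$, so $I/\fm^{m+1}$ is an $R/\fm=\IC$-vector space, and
\[\dim_\IC(I/\fm^{m+1}) = \dim_\IC(R/\fm^{m+1}) - \dim_\IC(R/I) = \binom{d+m}d - \bigl(\binom{d+m}d-1\bigr) = 1.\]
Hence $I = \fm^{m+1} + \IC\cdot g$ for some $g\in \fm^m\setminus \fm^{m+1}$.

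Next I would compute $\fm I = \fm^{m+2} + \fm g$ and observe that $\fm g\subseteq \fm^{m+1}$, so $\fm I\subseteq \fm^{m+1}\subseteq I$. The resulting short exact sequence
\[0\to \fm^{m+1}/\fm I\to I/\fm I\to I/\fm^{m+1}\to 0,\]
combined with the equality $\dim_\IC(I/\fm^{m+1})=1$ just established, reduces the problem to
\[\dim_\IC(I/\fm I) = 1 + \dim_\IC\bigl(\fm^{m+1}/(\fm^{m+2}+\fm g)\bigr),\]
where the summand $1$ is the contribution of $g$.

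The heart of the argument, and the step I expect to be the main obstacle, is to compute $\dim_\IC(\fm^{m+1}/(\fm^{m+2}+\fm g))$, i.e. to determine the dimension of the image of $\fm g$ in the degree-$(m+1)$ graded piece $\fm^{m+1}/\fm^{m+2}$. Here I would pass to the associated graded ring $\mathrm{gr}_\fm R\cong \IC[t_1,\dots,t_d]$, which is a polynomial ring precisely because $R$ is regular, and let $G\in \IC[t_1,\dots,t_d]_m$ be the nonzero leading form of degree $m$ determined by the class of $g$ in $\fm^m/\fm^{m+1}$. A short leading-term computation shows that, for $h\in\fm$, the degree-$(m+1)$ part of $hg$ equals the product of the linear part of $h$ with $G$, so the image of $\fm g$ in $\fm^{m+1}/\fm^{m+2}\cong \IC[t_1,\dots,t_d]_{m+1}$ is exactly the span $\langle t_1 G,\dots,t_d G\rangle$. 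The decisive point is that these $d$ forms are linearly independent: a relation $\bigl(\sum_i c_i t_i\bigr)G = 0$ in the integral domain $\IC[t_1,\dots,t_d]$ forces $\sum_i c_i t_i = 0$ because $G\neq 0$, whence all $c_i=0$. Thus the image has dimension $d$, and since $\dim_\IC(\fm^{m+1}/\fm^{m+2}) = \binom{d+m}{d-1}$, we get $\dim_\IC(\fm^{m+1}/(\fm^{m+2}+\fm g)) = \binom{d+m}{d-1}-d$. Substituting into the previous display yields $\dim_\IC\cI_\xi(x) = 1 + \binom{d+m}{d-1} - d$, as claimed.
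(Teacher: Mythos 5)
Your proof is correct and follows essentially the same route as the paper's: both establish $I=\fm^{m+1}+\IC\, g$ with $g$ of order exactly $m$, split $I/\fm I$ into the one-dimensional piece spanned by $g$ and the piece $\fm^{m+1}/(\fm^{m+2}+\fm g)$, and compute the latter by showing that multiplication by the leading form of $g$ embeds $\fm/\fm^2$ into $\fm^{m+1}/\fm^{m+2}$, giving a $d$-dimensional image. The only cosmetic differences are that the paper reduces to $X=\IA^d$ and takes $g$ homogeneous of degree $m$, whereas you work intrinsically in the regular local ring $\reg_{X,x}$ via its associated graded ring, and you spell out the integral-domain argument for the injectivity that the paper merely asserts.
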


\begin{proof}
We can assume for simplicity that $X=\IA^d$ is the affine space and $x=0$ is the origin.
We write $\cI:=\cI_\xi$ and $\fm:=\fm_x=(x_1,\dots, x_d)$, which are ideals in $\IC[x_1,\dots, x_d]$, and set $V:=\cI(x)=\cI/\fm\cdot \cI$. By assumption, we have $I=\fm^{m+1}+(h)$ for some $h\in \IC[x_1,\dots, x_d]$ homogenous of degree $m$. Hence, $\fm\cdot I=\fm^{m+2}+\fm\cdot (h)$. We consider the subspace $U:=\fm^{m+1}/\fm\cdot I$ of $V$.
Note that the $\IC$-linear map 
\[
\fm/\fm^2\to \fm^{m+1}/\fm^{m+2}\quad,\quad \bar x_i\mapsto \overline{x_i\cdot h} 
\]
is injective. Hence, $\fm\cdot (h)/\fm^{m+2}$ is a $d$-dimensional subspace of the $\binom{d+m}{d-1}$-dimensional vector space $\fm^{m+1}/\fm^{m+2}$. Since $U=\bigl(\fm^{m+1}/\fm^{m+2}\bigr)/\bigl(\fm\cdot (h)/\fm^{m+2}\bigr)$, we get $\dim U= \binom{d+m}{d-1} -d$. Furthermore, the quotient $V/U$ is one-dimensional, spanned by $\bar h$. In summary, 
\[
 \dim V=\dim (U/V) +\dim U= 1+\binom{d+m}{d-1}-d\,.\qedhere
\]
\end{proof}

We now consider the set-up of \autoref{subsec:family} with $\ell=\rank(\Sym^m\Omega_X)-1=\binom{m+d-1}{d-1}$. This means that $f\colon \IG\to X$ is actually a $\IP$-bundle, and the family $\cZ$, that we constructed in \autoref{subsec:family} parametrises punctual subschemes of length $\binom{m+d}d-1$ with the property of $\xi$ of \autoref{lem:xi}.

\begin{lemma}
The non-derived pull-back $\cE:=(\id_{\IG},f)^*\cI_{\cZ}$ is a vector bundle of rank $1+\binom{d+m}{d-1}-d$ on $\IG$. 
\end{lemma}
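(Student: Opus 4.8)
The plan is to show that $\cE := (\id_{\IG},f)^*\cI_{\cZ}$ is a vector bundle by exhibiting it as the fibrewise ideal fibre $\cI_\xi(x)$ from \autoref{lem:xi} and proving that these fibres assemble into a locally free sheaf of the expected rank $1+\binom{d+m}{d-1}-d$. Since $\cZ$ is supported on the graph $\Gamma_f=(f\times\pr_X)^{-1}\Delta$, the restriction along $(\id_\IG,f)$ picks out exactly the ideal sheaf along this graph, and fibrewise over a point of $\IG$ lying over $x\in X$ it recovers the ideal-fibre of the corresponding punctual subscheme $\xi$ of length $\binom{d+m}d-1$. By \autoref{lem:xi}, each such fibre has dimension $1+\binom{d+m}{d-1}-d$, so the only thing to prove is that $\cE$ is \emph{locally free} of that constant rank, rather than merely having fibres of constant dimension.

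First I would assemble the family version of the computation in \autoref{lem:xi}. Restricting the short exact sequence \eqref{eq:cF} along $(\id_\IG,f)$ gives, after using $(\id_\IG,f)^*(\id_\IG,f)_*\cQ\cong\cQ$ (valid because $(\id_\IG,f)$ is a closed embedding and $\cQ$ is locally free), a complex relating $\cE=(\id_\IG,f)^*\cI_\cZ$ to $\cQ$ and to $(\id_\IG,f)^*$ of the other terms. The cleaner route is to restrict the \emph{inclusion} $\cI_\cZ\hookrightarrow \reg_{\IG\times X}$ along $(\id_\IG,f)$: since $(\id_\IG,f)^*\reg_{\IG\times X}\cong\reg_\IG$, the non-derived pull-back $\cE$ is the image (cokernel of the kernel) inside a rank-one free module, and one computes it from the graded pieces using \eqref{eq:sesgraphf} together with the universal quotient map $\eps\colon f^*\Sym^m\Omega_X\to\cQ$. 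Concretely, $\cI_\cZ$ is generated along $\Gamma_f$ by $(f\times\id_X)^*\cI_\Delta^{m+1}$ together with the kernel of $\eps$ in degree $m$, and restricting to $\Gamma_f$ collapses the ambient structure sheaf so that $\cE$ is built from $f^*\bigl(\Sym^{m}\Omega_X\bigr)$ and $f^*\bigl(\Sym^{\ge m+1}\Omega_X\bigr)$-type pieces twisted by the kernel $\ker(\eps)$, which is itself a vector bundle of rank $\binom{m+d-1}{d-1}-\ell=1$ here.

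The key step, and the main obstacle, is to verify local freeness. The standard tool is the following: a coherent sheaf on a reduced (indeed smooth) variety whose fibre dimension $\dim_{\IC}\cE\otimes\kappa(y)$ is constant is locally free; see the semicontinuity criterion in \cite[II, Ex.~5.8]{Hartshorne--stable} or more directly the fact that a finitely generated module over a Noetherian local ring with locally constant fibre rank over a reduced scheme is locally free. I would therefore argue that $\dim_\IC \cE\otimes\kappa(y)$ is independent of $y\in\IG$: by $\GL$-equivariance of the construction it suffices to compute this dimension at a single point in each fibre of $f$, and \autoref{lem:xi} supplies the value $1+\binom{d+m}{d-1}-d$ there. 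The one thing requiring care is that the non-derived pull-back $(\id_\IG,f)^*\cI_\cZ$ agrees fibrewise with the ideal-fibre $\cI_\xi(x)=\cI_\xi/\fm_x\cI_\xi$ of \autoref{lem:xi}; this holds because restricting $\cI_\cZ$ first along $(\id_\IG,f)$ and then to a point $y$ equals restricting $\cI_{\cZ_y}=\cI_\xi$ to the point $x=f(y)$, by compatibility of pull-back with base change to the fibre. Once constant fibre rank is established over the smooth (hence reduced) variety $\IG$, local freeness of $\cE$ of rank $1+\binom{d+m}{d-1}-d$ follows immediately.
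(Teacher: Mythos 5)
Your argument is correct in substance and follows essentially the same route as the paper's proof: reduce local freeness to constancy of the fibre dimension of $\cE$ over the smooth (hence reduced) variety $\IG$, identify the fibre $\cE(y)$ at $y\in\IG$ with $\cI_\xi(x)$ for $\xi=\cZ_y$ and $x=f(y)$, and then quote \autoref{lem:xi}. Two steps should be tightened, though. First, the identification $\cE(y)\cong\cI_\xi(x)$ is \emph{not} a formal compatibility of pull-back with base change: restricting an ideal sheaf to a fibre only gives a surjection onto the ideal sheaf of the fibre, with kernel a $\Tor_1$-term, and what kills that term here is precisely the flatness of $\cZ$ over $\IG$ established in \autoref{subsec:family}. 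The paper invokes this flatness explicitly at exactly this point, and your proof should as well, since without it the claim is false in general. Second, your appeal to $\GL$-equivariance to reduce the fibre-dimension computation to a single point in each fibre of $f$ is both unnecessary and invalid as stated: linear coordinate changes do not act transitively on $\Gr(\Sym^m\Omega_{X,x},\ell)$ once $m\ge 2$ (for $d=m=2$, the kernels spanned by $x_1^2$ and by $x_1x_2$ lie in different orbits), so this reduction would not reach all points. Fortunately it is not needed: by construction \emph{every} fibre $\cZ_y$ satisfies the hypotheses of \autoref{lem:xi}, so that lemma gives the value $1+\binom{d+m}{d-1}-d$ at every $y\in\IG$ directly, which is exactly how the paper argues. (A small bibliographic slip: the constant-fibre-dimension criterion is Exercise II.5.8 of Hartshorne's book, cited in the paper as such, not the stable reflexive sheaves paper.)
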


\begin{proof}
It is sufficient to prove that, for every closed point $t\in \IG$, the fibre $\cE(t)$ is of dimension $1+\binom{d+m}{d-1}-d$; see e.g.\ \cite[Ex.\ 5.8]{HarAGbook}. Let $\xi:=\cZ_t\subset X$ be the fibre of the family $\cZ\subset \IG\times X$ over $t$. Then $\xi$ is a subscheme concentrated in $x:=f(t)$ with the same properties as $\xi$ in \autoref{lem:xi}. 
Considering the commutative diagram of closed embeddings
\[
\xymatrix{
\{t\}\ar^{t\,\mapsto (t,x)\quad}[r]\ar[d] & \{t\}\times X\ar[d]\\
\IG\ar[r] & \IG\times X\,,
}
\]
we see that $\cE(t)\cong \cI_{\cZ\mid \{t\}\times X}(t,x)\cong \cI_{\xi}(x)$, where the last isomorphism uses the flatness of $\cZ$ over $\IG$. Hence, the result follows from \autoref{lem:xi}.
\end{proof}

We now consider the $\IP$-bundle $p\colon Y:=\IP(\cE)\to \IG$ and set $g:=f\circ p\colon Y\to X$. There is the commutative diagram with cartesian squares
\begin{align*}
\xymatrix{
Y \ar@^{(->}^{(\id_{Y},g)\quad}[r]  \ar^{p}[d] \ar@/_6mm/_{g}[dd] & Y\times X\ar^{p\times \id_X}[d] \ar@/^4mm/^{\pr_X}[drr] &  \\
 \IG \ar@^{(->}^{(\id_{\IG},f)\quad}[r]  \ar^{f}[d] & \IG\times X\ar^{f\times \id_X}[d] \ar^{\pr_X}[rr] & & X \\
 X \ar@^{(->}^{\delta\quad}[r] & X\times X \ar@/_4mm/_{\pr_X}[rru] & &.
}
\end{align*}
Let $\alpha\colon p^*\cE\to \reg_p(1)$ be the universal rank 1 quotient. We set $\cJ:=(p\times \id_X)^*\cI_\cZ$, which, by flatness of $p\times \id_X$, is an ideal sheaf on $Y\times X$. Furthermore, we consider the unit of adjunction 
\[
 \eta\colon \cJ\to (\id_Y,g)_*(\id_Y,g)^*\cJ\cong (\id_Y,g)_*p^*(\id_{\IG}, f)^*\cI_{\cZ}\cong (\id_Y,g)_*p^* \cE 
\]
and the composition $\beta:=(\id_Y,g)_*\alpha\circ \eta\colon \cJ\to (\id_Y,g)_*\reg_p(1)$, which is again surjective. We set $\cJ':=\ker(\beta)$ and denote the subscheme corresponding to this ideal sheaf by $\cZ'\subset Y\times X$. Applying the snake lemma to the diagram
\begin{align*}
\xymatrix{
& 0\ar[d] &0\ar[d] & & \\
0\ar[r] & \cJ'\ar[d] \ar[r] & \reg_{Y\times X}\ar^{\id}[d] \ar[r] & \reg_{\cZ'}\ar[d] \ar[r] & 0 \\
0\ar[r] & \cJ\ar^\beta[d]\ar[r] & \reg_{Y\times X} \ar[r]\ar[d] & (p\times \id_X)^*\reg_{\cZ}\ar[d] \ar[r] & 0 \\
 & (\id_Y,g)_*\reg_p(1)\ar[d] &0 &0 & \\
& 0 & & & 
}
\end{align*}
yields a short exact sequence
\begin{align}\label{eq:Z'}
 0\to (\id_Y,g)_*\reg_p(1)\to \reg_{\cZ'}\to (p\times \id_X)^*\reg_\cZ\to 0\,.
\end{align}
It follows that $\cZ'$ is flat of degree $\deg(\cZ)+1=\binom{m+d}d$ over $Y$. We set $n:=\binom{m+d}d$ and denote the classifying morphism for $\cZ'$ by $\nu\colon Y\to X^{[n]}$. 
The morphism $(p, \nu) \colon Y \to \mathbb G \times X^{[n]}$ embeds $Y$ as the subscheme of $\mathbb G \times X^{[n]}$ whose points are pairs
\[
  \{(\xi, \xi') \mid \xi \in \mathbb G, \xi \in X^{[n]}, \xi \subset \xi'\}.
\]

\begin{theorem}\label{thm:Pres}
For $d\ge 2$ and $n=\binom {m+d}d$ for some $m\in \IN$, the functor 
 \[
  N:=g_*\bigl(\reg_p(-1)\otimes \nu^*(\_)\bigr)\colon \D(X^{[n]})\to \D(X)
 \]
is left-inverse to $(\_)^{[n]}$. Also, $N[1]$ is left-inverse to $\FF_n$.
\end{theorem}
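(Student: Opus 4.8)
The plan is to mirror the proof of \autoref{thm:Gres} as closely as possible, replacing the single short exact sequence \eqref{eq:cF} by the filtration of $\reg_{\cZ'}$ coming from \eqref{eq:Z'}. First I would fix $E\in\D(X)$ and, using \autoref{lem:tautbasechange} together with the classifying morphism $\nu$, identify $\nu^*E^{[n]}\cong\FM_{\reg_{\cZ'}}(E)$. The exact sequence \eqref{eq:Z'} then produces, after applying $\FM_{(\_)}(E)$, an exact triangle
\[
\FM_{(\id_Y,g)_*\reg_p(1)}(E)\to \nu^*E^{[n]}\to \FM_{(p\times\id_X)^*\reg_\cZ}(E)\to \FM_{(\id_Y,g)_*\reg_p(1)}(E)[1]\,.
\]
By the projection formula the first term is $\reg_p(1)\otimes g^*E$, and by flat base change along $p\times\id_X$ (using the cartesian square in the big diagram and \autoref{lem:tautbasechange} applied to $\cZ$ over $\IG$) the third term is $p^*\iota^*E^{[n]}$, i.e.\ $p^*$ of the family over $\IG$ analysed in \autoref{thm:Gres}.

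Next I would apply the functor $g_*\bigl(\reg_p(-1)\otimes(\_)\bigr)=f_*p_*\bigl(\reg_p(-1)\otimes(\_)\bigr)$ to this triangle. The key computation is that $p_*\bigl(\reg_p(-1)\otimes\reg_p(1)\bigr)\cong p_*\reg_Y\cong\reg_\IG$, so the first term contributes $f_*(\cQ'\otimes\_)$-free data and collapses to $f_*f^*E$, which is $E$ by \autoref{lem:Grasssod}(iii) applied to the $\IP$-bundle $\IG\to X$ (note $\IG=\Gr(\Sym^m\Omega_X,\ell)$ with $\ell=\binom{m+d-1}{d-1}$ here). For the third term, I would use that $\reg_p(-1)$ has vanishing pushforward along the projective bundle $p$, namely $p_*\reg_p(-1)\cong 0$, so $p_*\bigl(\reg_p(-1)\otimes p^*(\_)\bigr)\cong 0$ by the projection formula; this kills the contribution of $p^*\iota^*E^{[n]}$ entirely. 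Assembling the triangle then yields a natural isomorphism $N(E^{[n]})\cong f_*f^*E\cong E$, which is the first assertion.

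For the second assertion, because of the triangle \eqref{eq:FFF} it suffices to check $N(\reg_{X^{[n]}})\cong 0$. Since $\nu^*\reg_{X^{[n]}}\cong\reg_Y\cong g^*\reg_X$, we have $N(\reg_{X^{[n]}})\cong g_*\bigl(\reg_p(-1)\otimes g^*\reg_X\bigr)$, and again the vanishing $p_*\reg_p(-1)\cong 0$ forces this to be zero; then $N[1]$ is a left-inverse of $\FF_n$ exactly as in the proof of \autoref{thm:Gres}.

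The step I expect to be the main obstacle is the flat base change identifying $\FM_{(p\times\id_X)^*\reg_\cZ}(E)$ with $p^*$ of the $\IG$-side Fourier--Mukai transform, since one must keep careful track of which projection is being pushed forward and verify that the relevant square in the large diagram is cartesian with flat vertical maps; this is where a sign error or a transposed projection could slip in. The cohomology computations $p_*\reg_p(-1)\cong 0$ and $p_*\reg_Y\cong\reg_\IG$ are standard for projective bundles, but I would state them explicitly to make the collapse of the triangle transparent.
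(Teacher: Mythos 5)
Your proposal is correct and follows essentially the same route as the paper's own proof: the triangle obtained by applying $\FM_{(\_)}(E)$ to \eqref{eq:Z'}, the identifications of its outer terms via the projection formula and flat base change, the application of $g_*\bigl(\reg_p(-1)\otimes(\_)\bigr)$, and the reduction of the $\FF_n$ statement to $N(\reg_{X^{[n]}})\cong 0$ via \eqref{eq:FFF} all match the paper. The only cosmetic difference is that you justify the key vanishing $p_*\bigl(\reg_p(-1)\otimes p^*(\_)\bigr)\cong 0$ by the elementary computation $p_*\reg_p(-1)\cong 0$ plus the projection formula, whereas the paper cites \autoref{lem:Grasssod}(i); for the $\IP$-bundle $p$ these are the same fact, and it is precisely this step that requires $d\ge 2$, since for $d=1$ the map $p$ is an isomorphism and the vanishing fails.
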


\begin{proof}
The proof is very similar to that of \autoref{thm:Gres}.
Let $E\in \D(X)$. By \autoref{lem:tautbasechange}, we have a natural isomorphism $\nu^*E^{[n]}\cong \FM_{\reg_{\cZ'}}(E)$. The short exact sequence \eqref{eq:Z'} induces the exact triangle 
\begin{align}\label{eq:sometriangle}
 \FM_{(\id_Y,g)_*\reg_p(1)}(E)\to \FM_{\reg_{\cZ'}}(E)\to \FM_{(p\times \id_X)^*\reg_{\cZ}}(E)\to \FM_{(\id_Y,g)_*\reg_p(1)}(E)[1]\,. 
\end{align}
By projection formula, we have $\FM_{(\id_Y,g)_*\reg_p(1)}(E)\cong \reg_p(1)\otimes g^*E$. By flat base change, we have $\FM_{(p\times \id_X)^*\reg_{\cZ}}(E)\cong p^*\FM_{\reg_{\cZ}}(E)$. Hence,
\eqref{eq:sometriangle} can be rewritten as 
\[
\reg_p(1)\otimes g^*E\to \nu^*E^{[n]}\to p^*\FM_{\reg_{\cZ}}(E)\to \reg_p(1)\otimes g^*E[1]\,. 
\]
Applying $p_*(\reg_p(-1) \otimes \_)$, we get the exact triangle
\[
p_*p^*f^*E\to p_*(\reg_p(-1) \otimes \nu^*E^{[n]})\to p_*\bigl(\reg_p(-1) \otimes p^*\FM_{\reg_{\cZ}}(E)\bigr)\to \cQ\otimes p_*p^*f^*E[1]\,. 
\]
By \autoref{lem:Grasssod}, we see that $p_*\bigl(\reg_p(-1) \otimes p^*\FM_{\reg_{\cZ}}(E)\bigr)$ vanishes, and we get a natural isomorphism $p_*(\reg_p(-1) \otimes \nu^*E^{[n]})\cong p_*p^*f^*E \cong f^*E$.
Note that the assumption $d\ge 2$ is needed for the above vanishing since, for $d=1$, we would have $\rank(\cE)=1$ so that $p\colon \IP(\cE)\to \IG$ is an isomorphism.
 Applying $f_*$, we get a natural isomorphism 
\[
 N(E^{[n]})\cong f_*p_*(\reg_p(-1) \otimes \nu^*E^{[n]})\cong f_*f^*E\cong E\,,
\]
which means that we have an isomorphism of functors $N\circ (\_)^{[n]}\cong \id$.

Now, in order to prove that $N\circ \FF_n\cong [-1]$, because of \eqref{eq:FFF}, we only need to check that $N(\reg_{X^{[n]}})=0$. As $\nu^*\reg_{X^{[n]}}\cong \reg_Y\cong p^*\reg_{\IG}$, this follows directly from \autoref{lem:Grasssod}(i).  
\end{proof}

\begin{theorem}\label{thm:lefti}
For $d\ge 2$, the functors $(\_)^{[n]}\colon \D(X)\to \D(X^{[n]})$ and  $\FF_n\colon \D(X)\to \D(X^{[n]})$ both have a left-inverse for every $n\in \IN$.
\end{theorem}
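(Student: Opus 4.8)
The plan is to deduce this directly from \autoref{thm:Gres} and \autoref{thm:Pres} by a case distinction governed purely by the arithmetic of $n$ relative to the binomial coefficients $\binom{u+d}d$. First I would record the elementary observation that, since $d\ge 2$, the numbers $\binom{u+d}d$ for $u\in\IN_0$ form the strictly increasing sequence $1=\binom{0+d}d<d+1=\binom{1+d}d<\binom{2+d}d<\dots$, so that every integer $n\ge 2$ satisfies exactly one of two alternatives: either $n=\binom{m+d}d$ for some $m\in\IN$, or $n\neq\binom{u+d}d$ for all $u\in\IN_0$. (It is precisely the failure of $\binom{u+d}d$ to be surjective onto $\IN$ for $d\ge2$, in contrast to the curve case, that makes this dichotomy meaningful.)

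In the first alternative I would invoke \autoref{thm:Pres}, which produces a functor $N$ with $N\circ(\_)^{[n]}\cong\id$ and $N[1]\circ\FF_n\cong\id$; in the second alternative I would invoke \autoref{thm:Gres}, which produces a functor $K$ with $K\circ(\_)^{[n]}\cong\id$ and $K[1]\circ\FF_n\cong\id$. In either alternative both $(\_)^{[n]}$ and $\FF_n$ thereby acquire a left-inverse, so the two theorems jointly settle every $n\ge 2$ with no additional argument. There is no real computation to do here; the content is entirely the verification that the hypotheses of the two theorems are complementary and exhaustive on the range $n\ge 2$.

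The one point genuinely left over is the degenerate endpoint $n=1$, which is excluded from the hypotheses of both cited theorems (\autoref{thm:Gres} because $1=\binom{0+d}d$, and \autoref{thm:Pres} because it needs $m\ge1$). For the tautological functor this is immediate: $X^{[1]}\cong X$, the universal family $\Xi_1\subset X\times X$ is the diagonal, hence $(\_)^{[1]}\cong\id_{\D(X)}$, which is its own left-inverse. I expect this small endpoint case, rather than the main dichotomy, to be where care is actually needed, since the companion functor $\FF_1=\FM_{\cI_\Delta}$ is degenerate: via the triangle \eqref{eq:FFF} it already behaves quite differently from $\FF_n$ for $n\ge 2$, so one should either analyse $\FF_1$ directly or understand the $\FF_n$-part of the statement as asserted for $n\ge 2$.
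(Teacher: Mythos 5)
Your proposal is correct and is essentially the paper's own proof: the paper settles the statement in two lines by exactly this dichotomy, invoking \autoref{thm:Pres} when $n=\binom{m+d}d$ with $m\in\IN$ and \autoref{thm:Gres} otherwise.

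Where you go beyond the paper is the endpoint $n=1$, and your caution there is justified --- in fact it exposes a small gap in the paper's own proof. The paper's phrase ``if $n$ is not of this form'' would send $n=1$ to \autoref{thm:Gres}, but that theorem's hypothesis excludes $n=1$, since $1=\binom{0+d}d$ with $u=0\in\IN_0$ (and its recipe $m-1=\max\{u\mid\binom{u+d}d<n\}$ is undefined there). Your resolution for the tautological functor is the right one: $\Xi_1=\Delta$, so $(\_)^{[1]}\cong\id_{\D(X)}$. For $\FF_1$, however, your hedge can be sharpened to a negative answer rather than left open: taking $n=1$ in the triangle \eqref{eq:FFF} gives $\FF_1(E)\cong\cone\bigl(\Ho^*(E)\otimes\reg_X\to E\bigr)[-1]$, so for any projective $X$ with $\Ho^*(\reg_X)\cong\IC$ (e.g.\ $X=\IP^d$, $d\ge 2$) one finds $\FF_1(\reg_X)\cong 0\cong\FF_1(0)$ while $\reg_X\not\cong 0$; hence $\FF_1$ is not injective on isomorphism classes and admits no left-inverse at all. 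So the $\FF_n$ half of the statement genuinely fails at $n=1$ and must be read as an assertion for $n\ge 2$; your instinct that the delicate point is this endpoint, not the dichotomy, is exactly right.
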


\begin{proof}
 If $n=\binom{m+d}d$ for some $m\in \IN$, we get a left-inverse by \autoref{thm:Pres}. If $n$ is not of this form, we get a left-inverse by \autoref{thm:Gres}.
\end{proof}

\section{Reconstruction using fixed-point free automorphisms}\label{sect:freeautos}

\subsection{Multigraphs as families of points}\label{subsect:multigraphs}

Let $\phi_1,\dots,\phi_n\in \Aut(X)$ be automorphisms with \textit{empty pairwise equalisers}, which means that $\phi_i(x)\neq \phi_j(x)$ for every $i\neq j$ and every $x\in X$. Then the graphs $\Gamma_{\phi_i}\subset X\times X$ are pairwise disjoint and 
\[
 \Gamma:=\bigsqcup_{i=1}^n\Gamma_{\phi_i}\subset X\times X
\]
is a family of reduced length $n$ subschemes over $X$. We denote its classifying morphism by \[\psi:=\psi_{\{\phi_1,\dots,\phi_n\}}\colon X\to X^{[n]}\,.\] This morphism maps to the open part $X^{[n]}_0$ of reduced subschemes, which is naturally isomorphic to the open part $X^{(n)}_0$ of the symmetric product. Hence, we can equivalently describe $\psi$ as the morphism $X\to X^{(n)}_0$, $x\mapsto \phi_1(x)+\dots+\phi_n(x)$.
Note that $\psi$ is always finite but, in general, not a closed embedding. For example, if $n=2$, $\phi_1=\id$, and $\phi_2=\iota$ is a fixed-point free involution, we have $\psi(x)=\psi(\iota(x))$ for all $x\in X$. In that case, $\psi$ factorises over a closed embedding $X/\iota\hookrightarrow X^{[2]}$ of the quotient.

\begin{lemma}\label{lem:psipullback}
We have an isomorphism of functors
$\psi^*\circ (\_)^{[n]}\cong \phi_1^*\oplus\phi_2^*\oplus\dots\oplus \phi_n^*$. 
\end{lemma}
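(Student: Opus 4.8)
The plan is to follow the same base-change strategy used throughout the paper, applying \autoref{lem:tautbasechange} to the specific family $\Gamma=\bigsqcup_{i=1}^n\Gamma_{\phi_i}\subset X\times X$ whose classifying morphism is exactly $\psi$. Since $\Gamma$ is a flat family of length $n$ subschemes of $X$ over $X$ (the graphs are pairwise disjoint by the empty-equaliser hypothesis, so $\Gamma$ is their disjoint union and flatness is immediate), \autoref{lem:tautbasechange} gives directly
\[
\psi^*\circ(\_)^{[n]}\cong \pr^{\Gamma}_{X*}\circ\pr^{\Gamma*}_X\cong\FM_{\reg_\Gamma}\,.
\]
Here the source copy of $X$ (the base $T=X$) and the target copy of $X$ inside $X\times X$ play the two different roles, and one must be careful which projection is $\pr_T$ and which is the one restricting from $X^{[n]}\times X$.

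Next I would decompose the structure sheaf of the disjoint union. Because the $\Gamma_{\phi_i}$ are pairwise disjoint closed subschemes, we have $\reg_\Gamma\cong\bigoplus_{i=1}^n\reg_{\Gamma_{\phi_i}}$, and hence an isomorphism of Fourier--Mukai kernels giving
\[
\FM_{\reg_\Gamma}\cong\bigoplus_{i=1}^n\FM_{\reg_{\Gamma_{\phi_i}}}\,.
\]
It then remains to identify each summand $\FM_{\reg_{\Gamma_{\phi_i}}}$ with the pullback functor $\phi_i^*$. This is the analogue of the elementary computation in the proof of \autoref{lem:openpullback}, where the graph of a morphism gives the pullback along that morphism. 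Concretely, $\Gamma_{\phi_i}=(\id,\phi_i)(X)\subset X\times X$, so the integral transform with kernel $\reg_{\Gamma_{\phi_i}}$ is $\pr_{X*}(\reg_{\Gamma_{\phi_i}}\otimes\pr_X^*(\_))$, and since $\Gamma_{\phi_i}$ is the graph of $\phi_i$, pushing forward along one projection after pulling back along the other reproduces $\phi_i^*$ (the relevant projection restricted to the graph being an isomorphism over $X$).

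I do not expect any serious obstacle here; the statement is essentially a bookkeeping consequence of the general base-change lemma together with the additivity of Fourier--Mukai transforms under a direct-sum decomposition of the kernel. The one point requiring mild care is the identification $\FM_{\reg_{\Gamma_{\phi_i}}}\cong\phi_i^*$, namely checking that the composite isomorphism $X\xrightarrow{\sim}\Gamma_{\phi_i}$, followed by the two projections, recovers precisely $\phi_i$ and not $\phi_i^{-1}$ or the identity; tracing the diagram of \autoref{lem:tautbasechange} with the correct orientation of the projections settles this. Thus the proof is short: invoke \autoref{lem:tautbasechange} for $\Gamma$, split $\reg_\Gamma$ as a direct sum over the disjoint graphs, and recognise each resulting transform as $\phi_i^*$.
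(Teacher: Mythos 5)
Your proposal is correct and follows exactly the paper's own argument: apply \autoref{lem:tautbasechange} to the family $\Gamma=\bigsqcup_{i=1}^n\Gamma_{\phi_i}$ to get $\psi^*\circ(\_)^{[n]}\cong\FM_{\reg_\Gamma}$, split $\reg_\Gamma\cong\bigoplus_{i=1}^n\reg_{\Gamma_{\phi_i}}$ using disjointness of the graphs, and identify each $\FM_{\reg_{\Gamma_{\phi_i}}}$ with $\phi_i^*$. The extra care you take with the orientation of the projections is a sound (if routine) check, and the identification you arrive at is the correct one.
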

\begin{proof}
By \autoref{lem:tautbasechange}, we have $\psi^*\circ (\_)^{[n]}\cong \FM_{\reg_\Gamma}$. Now, the assertion follows from  
the facts that $\reg_{\Gamma}\cong \reg_{\Gamma_{\phi_1}}\oplus \reg_{\Gamma_{\phi_2}}\oplus\dots\oplus \reg_{\Gamma_{\phi_n}}$
and $\FM_{\reg_{\Gamma_{\phi_i}}}\cong\phi_i^*$.  
\end{proof}

\subsection{Reconstruction using multigraphs}

\begin{theorem}\label{thm:fixedfree}
Let $X$ be a smooth projective variety such that there exist a set of $n+1$ automorphisms $\{\phi_0,\phi_1,\dots,\phi_n\}\subset \Aut(X)$ with empty pairwise equalisers. Then the tautological functor $(\_)^{[n]}\colon \D(X)\to \D(X^{[n]})$ is injective on isomorphism classes and faithful. 
\end{theorem}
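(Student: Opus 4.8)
The goal is to reconstruct $E\in\D(X)$ from $E^{[n]}$, using the $n+1$ automorphisms $\{\phi_0,\dots,\phi_n\}$ with empty pairwise equalisers. The natural first move is to exploit \autoref{lem:psipullback}: for any size-$n$ subset $S\subset\{0,1,\dots,n\}$ we get a finite morphism $\psi_S\colon X\to X^{[n]}$ with $\psi_S^*\circ(\_)^{[n]}\cong\bigoplus_{i\in S}\phi_i^*$. Since each $\phi_i$ is an automorphism, $\phi_i^*$ is itself an equivalence and thus trivially invertible; the difficulty is that a single $\psi_S^*E^{[n]}$ only recovers the \emph{sum} $\bigoplus_{i\in S}\phi_i^*E$, not the individual summands, so we cannot yet isolate $E$.

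\textbf{Key idea: play the subsets against each other.} The reason for assuming $n+1$ rather than $n$ automorphisms is precisely so that we have enough overlapping subsets. First I would fix two of the size-$n$ subsets, say $S_0=\{1,\dots,n\}$ and $S_1=\{0,2,3,\dots,n\}$, which differ only in swapping index $1$ for index $0$. Applying \autoref{lem:psipullback} to each and using $\phi_i^*\cong\phi_j^*\circ(\phi_j^{-1}\phi_i)^*$ to transport everything into a common frame (say via $\phi_0^{-1}$), an isomorphism $E^{[n]}\cong F^{[n]}$ yields isomorphisms
\[
\phi_1^*E\oplus\bigoplus_{i=2}^n\phi_i^*E\;\cong\;\phi_1^*F\oplus\bigoplus_{i=2}^n\phi_i^*F,\qquad
\phi_0^*E\oplus\bigoplus_{i=2}^n\phi_i^*E\;\cong\;\phi_0^*F\oplus\bigoplus_{i=2}^n\phi_i^*F.
\]
Once the category involved is Krull--Schmidt (which holds for $\Coh(X)$ on a smooth projective variety, and hence for the bounded derived category after passing to cohomology sheaves), these two isomorphisms sharing a common direct summand $\bigoplus_{i=2}^n\phi_i^*(\_)$ let one cancel that summand and conclude $\phi_1^*E\oplus\phi_0^*E\cong\phi_1^*F\oplus\phi_0^*F$ — and indeed, by cycling through all the overlapping subsets, that each $\phi_i^*E$ and $\phi_i^*F$ match up. Applying the inverse equivalence $(\phi_i^{-1})^*$ then gives $E\cong F$, which is injectivity on isomorphism classes.

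\textbf{Faithfulness.} For faithfulness, the same machinery applies at the level of morphisms: a map $\alpha\colon E\to F$ in $\D(X)$ with $\alpha^{[n]}=0$ pulls back under each $\psi_S^*$ to $\bigoplus_{i\in S}\phi_i^*\alpha=0$, forcing each $\phi_i^*\alpha=0$ (again by the overlapping-subsets cancellation, now for morphisms), and since $\phi_i^*$ is an equivalence this gives $\alpha=0$. Here one must be slightly careful that $(\_)^{[n]}$ is a functor so $\psi_S^*\circ(\_)^{[n]}$ sends $\alpha$ to the image of $\alpha$ under the composite functor, matching $\bigoplus_{i\in S}\phi_i^*\alpha$ under the natural isomorphism of \autoref{lem:psipullback}.

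\textbf{Main obstacle.} The genuinely delicate point is the cancellation step — the assertion that a common direct summand can be stripped off an isomorphism of objects (or morphisms). On the abelian category $\Coh(X)$ this is exactly the Krull--Schmidt property, already invoked in the proof of \autoref{thm:reflexive} via \cite{Atiyah--Krull-Schmidt}. For objects of $\D(X)$ one should first reduce to sheaves: decompose along cohomology if $X$ is such that $\D(X)$ splits (as for curves), or more robustly argue summand-cancellation directly in $\D^{\mathrm b}(\Coh(X))$, which is Krull--Schmidt since each $\Hom$-space is finite-dimensional and idempotents split. I expect the bookkeeping of which subsets to compare, and verifying that the chain of cancellations isolates every $\phi_i^*E$ rather than merely a partial sum, to be where the real care is needed; the functorial transport via the $\phi_i^*$ and the appeal to \autoref{lem:psipullback} are routine by comparison.
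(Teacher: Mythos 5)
Your outline coincides with the paper's: normalise so that $\phi_0=\id$, use the $n+1$ classifying morphisms attached to the size-$n$ subsets of $\{\phi_0,\dots,\phi_n\}$, apply \autoref{lem:psipullback} to turn $E^{[n]}\cong F^{[n]}$ into the system $\bigoplus_{i\neq j}\phi_i^*E\cong\bigoplus_{i\neq j}\phi_i^*F$ for $j=0,\dots,n$, and finish with the Krull--Schmidt property of $\D(X)$ (the paper cites \cite[Cor.\ B]{Le-Chen--Karoubian}; your Hom-finiteness-plus-split-idempotents criterion is the same point, and this is where projectivity of $X$ is used). The faithfulness part is fine and agrees with the paper's, except that no ``cancellation for morphisms'' is needed: under \autoref{lem:psipullback} the morphism $\psi_j^*(\alpha^{[n]})$ is the diagonal morphism $\bigoplus_{i\neq j}\phi_i^*\alpha$, and a diagonal morphism between direct sums vanishes if and only if each component does.

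However, the key step --- extracting $E\cong F$ from the system of $n+1$ isomorphisms --- has a genuine gap, and it is exactly the content that the paper isolates as \autoref{prop:Krull-Schmidt}. Your two isomorphisms read $\phi_1^*E\oplus C_E\cong \phi_1^*F\oplus C_F$ and $\phi_0^*E\oplus C_E\cong \phi_0^*F\oplus C_F$, where $C_E=\bigoplus_{i=2}^n\phi_i^*E$ and $C_F=\bigoplus_{i=2}^n\phi_i^*F$. These have no common direct summand to cancel: $C_E$ and $C_F$ are images of the two \emph{different} objects $E$ and $F$ under the same functor, and knowing $C_E\cong C_F$ is essentially the statement being proved. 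Krull--Schmidt cancellation applies only to $A\oplus C\cong B\oplus C$ with literally the same $C$ on both sides. What your two isomorphisms actually give, by comparing multiplicities of indecomposables, is $\phi_1^*E\oplus\phi_0^*F\cong \phi_0^*E\oplus \phi_1^*F$ (equality of differences), not your claimed $\phi_0^*E\oplus\phi_1^*E\cong\phi_0^*F\oplus\phi_1^*F$ (equality of sums); and the subsequent ``cycling through all the overlapping subsets'' is precisely the part left unproved --- you yourself flag it as where the real care is needed. The paper fills this by \autoref{prop:Krull-Schmidt}, proved by induction on the number of indecomposable summands with a case distinction according to whether all the objects $\IB_j$ are isomorphic. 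Alternatively, the gap can be closed by multiplicity bookkeeping using \emph{all} $n+1$ isomorphisms at once: writing $m_A$ for the multiplicity function of an object $A$ on isomorphism classes of indecomposables, the hypotheses say $\sum_{i\neq j}m_{\phi_i^*E}=\sum_{i\neq j}m_{\phi_i^*F}$ for every $j$; summing over $j$ counts each term exactly $n$ times, whence $\sum_{i}m_{\phi_i^*E}=\sum_{i}m_{\phi_i^*F}$, and subtracting the $j$-th hypothesis gives $m_{\phi_j^*E}=m_{\phi_j^*F}$ for every $j$ --- in particular, for $j=0$, $E\cong F$. With this (or the paper's induction) inserted, your argument becomes correct; as written, the one mechanism you propose for the crucial step fails.
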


\begin{cor}\label{cor:abelian}
Let $A$ be an abelian variety. Then the functor $(\_)^{[n]}\colon \D(A)\to \D(A^{[n]})$ is injective on isomorphism classes and faithful for every $n\in \IN$.
\end{cor}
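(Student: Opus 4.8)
The plan is to deduce \autoref{cor:abelian} from \autoref{thm:fixedfree} by producing, for an abelian variety $A$ and any $n\in\IN$, a set of $n+1$ automorphisms with empty pairwise equalisers. The natural candidates are translations: for $a\in A$, let $t_a\colon A\to A$ denote translation by $a$. The key observation is that two translations $t_a$ and $t_b$ have equaliser $\{x\mid x+a = x+b\}$, which is empty precisely when $a\ne b$, since $x+a=x+b$ forces $a=b$ independently of $x$. Thus any collection of translations by \emph{pairwise distinct} points automatically has empty pairwise equalisers.

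First I would observe that an abelian variety, being a positive-dimensional connected algebraic group over $\IC$, has infinitely many points; in particular one may choose $n+1$ pairwise distinct points $a_0, a_1, \dots, a_n\in A$. Setting $\phi_i := t_{a_i}$ for $i=0,1,\dots,n$ yields automorphisms of $A$ (translations are isomorphisms, with inverse $t_{-a_i}$), and by the computation above they have empty pairwise equalisers. Since an abelian variety is smooth and projective, all the hypotheses of \autoref{thm:fixedfree} are met, and we conclude immediately that $(\_)^{[n]}\colon \D(A)\to \D(A^{[n]})$ is injective on isomorphism classes and faithful.

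I do not expect any genuine obstacle here: the corollary is essentially a direct specialisation once one notes that translations on $A$ are fixed-point-free relative to one another. The only point requiring a word of care is confirming that the equaliser condition is the correct one — namely that $\phi_i(x)\ne\phi_j(x)$ for all $x$ when $a_i\ne a_j$ — but for translations this is transparent, as the difference $\phi_i(x)-\phi_j(x)=a_i-a_j$ is a nonzero constant in $x$. The statement about injectivity on isomorphism classes then gives the claimed equivalence $E^{[n]}\cong F^{[n]}\Leftrightarrow E\cong F$ for all $E,F\in\D(A)$, matching \autoref{thm:ellipticintro} (the elliptic curve case) as the special case $\dim A=1$.
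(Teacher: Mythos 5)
Your proof is correct and is essentially the paper's own argument: the paper likewise invokes \autoref{thm:fixedfree} by noting that the translations form an infinite subgroup of $\Aut(A)$ whose elements have empty pairwise equalisers. Your write-up just spells out the (transparent) verification that $t_{a_i}(x)\neq t_{a_j}(x)$ for all $x$ whenever $a_i\neq a_j$.
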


\begin{proof}
There is an infinite subgroup of $\Aut(A)$ whose elements have empty pairwise equalisers, namely the subgroup of translations.    
\end{proof}

Note that \autoref{cor:abelian} also applies to elliptic curves, while all the other reconstruction results presented in this paper require $\dim X$ to be at least 2.

\begin{proof}[Proof of \autoref{thm:fixedfree}]
Replacing $\phi_i$ by $\phi_0^{-1}\circ \phi_i$, we may assume without loss of generality that $\phi_0=\id$.
For $j=0,\dots, n$, let $\psi_j:=\psi_{\{\phi_i\mid i\neq j\}}\colon X\to X^{[n]}$ be the classifying morphism for $\Gamma=\sqcup_{i=0,\dots, n\,,\, i\neq j} \Gamma_{\phi_i}\subset X\times X$, as discussed in \autoref{subsect:multigraphs}. 

By \autoref{lem:psipullback}, the composition $\psi_j\circ (\_)^{[n]}$, for any $j=0,\dots,n$, is faithful. This implies the faithfulness of $(\_)^{[n]}$.

Now, let $E,F\in \D(X)$ with $E^{[n]}\cong F^{[n]}$. Then, by \autoref{lem:psipullback}, we have 
\[
\bigoplus_{i\in\{0,\dots,n\}\,,\, i\neq j}\phi_i^*(E)\cong \psi_j^*E^{[n]}\cong \psi_j^*F^{[n]}\cong \bigoplus_{i\in\{0,\dots,n\}\,,\, i\neq j}\phi_i^*(F) 
\]
for every $j=0,\dots, n$. The category $\D(X)$ is a Krull-Schmidt category; see \cite[Cor.\ B]{Le-Chen--Karoubian}. Hence, $E\cong F$ follows from \autoref{prop:Krull-Schmidt} below.  
\end{proof}

\begin{prop}\label{prop:Krull-Schmidt}
Let $\cD$ be a Krull-Schmidt category, let $\Phi_0,\Phi_1,\dots, \Phi_n\le \Aut(\cD)$ be pairwise distinct linear autoequivalences with $\Phi_0=\id_{\cD}$. Then
for $E,F\in \cD$, we have:
\[
\bigoplus_{i\in\{0,\dots,n\}\,,\, i\neq j}\Phi_i(E)\cong \bigoplus_{i\in\{0,\dots,n\}\,,\, i\neq j}\Phi_i(F) \quad \forall\, j=0,\dots,n  \quad\Longrightarrow \quad E\cong F\,.  
\]
\end{prop}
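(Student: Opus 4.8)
The plan is to linearise the statement by passing to the split Grothendieck group and reducing everything to a one-line computation there. Since $\cD$ is Krull--Schmidt, the set of isomorphism classes of objects, equipped with $\oplus$, is the free commutative monoid on the set $\mathrm{Ind}$ of isomorphism classes of indecomposables; in particular it has the cancellation property and embeds into its group completion $A=\IZ^{(\mathrm{Ind})}$, the free abelian group on $\mathrm{Ind}$. Crucially, two objects of $\cD$ are isomorphic if and only if their classes coincide in $A$. Because each $\Phi_i$ is an additive autoequivalence, it sends indecomposables to indecomposables and induces a bijection of $\mathrm{Ind}$, hence a group automorphism $\phi_i$ of $A$; and $\phi_0=\id_A$ because $\Phi_0=\id_{\cD}$.

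Next I would translate the hypothesis into this setting. Write $x=[E]$, $y=[F]$, set $z=x-y\in A$, and let $S=\sum_{i=0}^n\phi_i\in\End(A)$. The class of $\bigoplus_{i\neq j}\Phi_i(E)$ is $\sum_{i\neq j}\phi_i(x)=(S-\phi_j)(x)$, and similarly for $F$, so the assumed isomorphisms become the equalities $(S-\phi_j)(z)=0$ in $A$ for every $j=0,\dots,n$.

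The conclusion is then pure linear algebra. Taking $j=0$ and using $\phi_0=\id$ gives $S(z)=z$. Substituting this back into $(S-\phi_j)(z)=0$ yields $\phi_j(z)=S(z)=z$ for every $j$. Summing these $n+1$ identities gives $S(z)=(n+1)z$; comparing with $S(z)=z$ we obtain $n\,z=0$. As $A$ is free abelian, hence torsion-free, and $n\geq 1$, this forces $z=0$, i.e.\ $[E]=[F]$ in $A$, whence $E\cong F$.

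The only genuine content lies in the first paragraph, and that is the step I would be most careful about: one must verify that isomorphism of objects is detected by equality of classes in the torsion-free group $A$ (this is exactly uniqueness of decomposition together with cancellation, i.e.\ the Krull--Schmidt hypothesis) and that each $\Phi_i$ acts on $A$ by a well-defined automorphism. Once this framework is in place the remaining computation is immediate. It is perhaps worth remarking that the argument never uses that the $\Phi_i$ are pairwise distinct; only $\Phi_0=\id$, additivity, invertibility, and $n\geq1$ are needed.
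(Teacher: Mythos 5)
Your proof is correct, and it takes a genuinely different route from the paper's. The paper argues at the level of objects: it sets $\IB_j:=\bigoplus_{i\neq j}\Phi_i(E)\cong\bigoplus_{i\neq j}\Phi_i(F)$ and runs an induction on the number of indecomposable summands of $E$, with a case split. If $\IB_0\not\cong\IB_\ell$ for some $\ell$, it finds an indecomposable $B$ common to $E$ and $F$, cancels it, and applies the induction hypothesis; if $\IB_0\cong\IB_\ell$ for all $\ell$, cancellation gives $E\cong\Phi_\ell(E)$ and $F\cong\Phi_\ell(F)$ for every $\ell$, whence $E^{\oplus n}\cong\IB_0\cong F^{\oplus n}$ and one concludes by Krull--Schmidt. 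Your argument linearises instead: passing to the split Grothendieck group $A=\IZ^{(\mathrm{Ind})}$, where isomorphism is detected because unique decomposition makes the monoid of isomorphism classes free (hence cancellative, hence embedded in $A$), the hypothesis becomes $(S-\phi_j)(z)=0$ for all $j$, and the summation trick yields $nz=0$, so $z=0$ by torsion-freeness. Both proofs rest on exactly the same foundation --- uniqueness of decompositions --- but your computation in $A$ replaces the induction and case analysis by three lines of linear algebra; in particular your final appeal to torsion-freeness of $A$ is precisely the Grothendieck-group avatar of the paper's closing step ``$E^{\oplus n}\cong F^{\oplus n}$ implies $E\cong F$''. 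What the paper's version buys is a formulation entirely in object-level language, with the cancellation property invoked explicitly where needed; what yours buys is brevity, a transparent reason why the statement is true, and the explicit observation (also true of, but not remarked on in, the paper's proof) that pairwise distinctness of the $\Phi_i$ is never used --- only $\Phi_0=\id_{\cD}$, additivity, invertibility, and $n\ge 1$.
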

\begin{proof}
For $j=0,\dots, n$, we set 
\[
\IB_j:=\bigoplus_{i\in\{0,\dots,n\}\,,\, i\neq j}\Phi_i(E)\cong \bigoplus_{i\in\{0,\dots,n\}\,,\, i\neq j}\Phi_i(F)\,.
\]
We note that the number of indecomposable factors (with multiplicity) of every $\IB_j$ is $n$ times the number of irreducible factors of $E$ as well as $n$ times the number of irreducible factors of $F$. In particular, the number of indecomposable factors of $E$ and of $F$ is the same, and we can argue by induction over that number. 

As the base case of the induction we can take the numbers of factors to be zero, in which case $E\cong 0\cong F$. Now assume that $E$ and $F$ both have $k>0$ indecomposable factors. By the cancellation property in Krull-Schmidt categories, we have 
\begin{align}\label{eq:congcondition}
 \IB_0\cong \IB_j\quad\Longleftrightarrow \quad E\cong \Phi_j(E) \text{ and } F\cong \Phi_j(F)\,.
\end{align}
If $\IB_0\not \cong \IB_\ell$ for some $\ell\in\{1,\dots, n\}$, we pick some indecomposable object $B$ that occurs in $\IB_\ell$ with a bigger multiplicity than in $\IB_0$. This $B$ must be an indecomposable factor of $E$ and of $F$. We write $E\cong E' \oplus B$ and $F\cong F'\oplus B$. It follows that, for every $j=0,\dots, n$, 
\[
\bigoplus_{i\in\{0,\dots,n\}\,,\, i\neq j}\Phi_i(E')\cong \bigoplus_{i\in\{0,\dots,n\}\,,\, i\neq j}\Phi_i(F')\cong \IB_j' \quad\text{with}\quad \IB_j\cong\IB_j'\oplus \bigoplus_{i\in\{0,\dots,n\}\,,\, i\neq j}\Phi_i(B) \,. 
\]
Hence, we can apply the induction hypothesis to get $E'\cong F'$, which implies $E\cong F$.

Now, assume that $\IB_0\cong \IB_\ell$ for every $\ell=1,\dots, n$. Then \eqref{eq:congcondition} shows that $E\cong \Phi_\ell(E)$ and $F\cong \Phi_\ell(F)$ for every $\ell$. Hence,
$E^{\oplus n}\cong \IB_0\cong F^{\oplus n}$. In a Krull-Schmidt category, $E^{\oplus n}\cong F^{\oplus n}$ implies $E\cong F$.
\end{proof}

\bibliographystyle{alpha}
\addcontentsline{toc}{chapter}{References}
\bibliography{references}

\end{document}